\numberwithin{equation}{section}
\theoremstyle{plain}
\newtheorem{theorem}{Theorem}[section]
\newtheorem{proposition}{Proposition}[section]
\newtheorem{lemma}{Lemma}[section]
\theoremstyle{definition}
\newtheorem{remark}{Remark}[section]
\def\now{%
\minute=\time%
\hour=\time \divide \hour by 60%
\hourMins=\hour \multiply\hourMins by 60%
\advance\minute by -\hourMins%
\zeroPadTwo{\the\hour}:\zeroPadTwo{\the\minute}%
}
\def\zeroPadTwo#1{\ifnum #1<10 0\fi#1}
\renewcommand{\cite}{\citet}
\def\^#1{\ifmmode {\mathaccent"705E #1} \else {\accent94 #1} \fi}
\def\~#1{\ifmmode {\mathaccent"707E #1} \else {\accent"7E #1} \fi}
\def\*#1{#1^\ast}
\edef\-#1{\noexpand\ifmmode {\noexpand\bar{#1}} \noexpand\else \-#1\noexpand\fi}
\def\>#1{\vec{#1}}
\def\.#1{\dot{#1}}
\def\atop{\@@atop}
\def\*#1{\mathscr{#1}}
\renewcommand{\leq}{\leqslant}
\renewcommand{\geq}{\geqslant}
\newcommand{\de}{\delta}
\newcommand{\eq}{\eqref}
\newcommand{\Var}{\mathop{\mathrm{Var}}\nolimits}
\def\be#1{\begin{equation*}#1\end{equation*}}
\def\ben#1{\begin{equation}#1\end{equation}}
\def\bes#1{\begin{equation*}\begin{split}#1\end{split}\end{equation*}}
\def\besn#1{\begin{equation}\begin{split}#1\end{split}\end{equation}}
\def\beqn#1\eeqn{\begin{align}#1\end{align}}
\def\beq#1\eeq{\begin{align*}#1\end{align*}}
\newcommand{\g}{\gamma}
\renewcommand\section{\@startsection {section}{1}{\z@}%
{-3.5ex \@plus -1ex \@minus -.2ex}%
{1.3ex \@plus.2ex}%
{\center\small\sc\mathversion{bold}\MakeUppercase}}
\def\subsection#1{\@startsection {subsection}{2}{0pt}%
{-3.5ex \@plus -1ex \@minus -.2ex}%
{1ex \@plus.2ex}%
{\bf\mathversion{bold}}{#1}}
\def\subsubsection#1{\@startsection{subsubsection}{3}{0pt}%
{\medskipamount}%
{-10pt}%
{\normalsize\itshape}{\kern-2.2ex. #1.}}
\def\blfootnote{\xdef\@thefnmark{}\@footnotetext}
\begin{document}

\title{Skewness correction in tail probability approximations for sums of local statistics
}

\author{Xiao Fang\footnote{Department of Statistics, The Chinese University of Hong Kong, Hong Kong. Email: xfang@sta.cuhk.edu.hk}, Li Luo\footnote{Department of Statistics, The Chinese University of Hong Kong, Hong Kong. Email: luoli@link.cuhk.edu.hk} and Qi-Man Shao\footnote{1. Department of Statistics and Data Science, The Southern University of Science and Technology, China. 2. Department of Statistics, The Chinese University of Hong Kong, Hong Kong. Email: qmshao@sta.cuhk.edu.hk}}

\date{}
\maketitle

\noindent{\bf Abstract:}
Correcting for skewness can result in more accurate tail probability approximations in the central limit theorem for sums of independent random variables. In this paper, we extend the theory to sums of local statistics of independent random variables and apply the result to $k$-runs, U-statistics, and subgraph counts in the Erd\"os-R\'enyi random graph.
To prove our main result, we develop exponential concentration inequalities and higher-order Cram\'er-type moderate deviations via Stein's method.

\medskip

\noindent{\bf AMS 2010 subject classification:} 60F05

\noindent{\bf Keywords and phrases:}
Stein's method, skewness correction, moderate deviations, local dependence, $k$-runs, U-statistics, Erd\"os-R\'enyi random graph

\section{Introduction}

Let $W_n=\sum_{i=1}^n X_i/\sqrt{n}$ where $\{X_1, X_2, \dots\}$ are independent and identically distributed (i.i.d.) with $E X_1=0, E X_1^2=1$, and $E e^{t_0 X_1}<\infty$ for a constant $t_0>0$.
It is known that (cf. \cite[Chapter VIII, Theorem 1]{Pe75})
\ben{\label{14}
\left| \frac{P(W_n> x)}{1-\Phi(x)}-1  \right|\leq  \frac{C(1+x^3)}{\sqrt{n}}\ \text{for}\ 0\leq x\leq C_0 n^{1/6}
}
and
\ben{\label{54}
\left| \frac{P(W_n> x)}{(1-\Phi(x))e^{\g x^3/6}}-1  \right|\leq C\big(\frac{1+x}{\sqrt{n}}+\frac{x^4}{n} \big)\ \text{for}\ 0\leq x\leq C_0 n^{1/4},
}
where $C_0$ is any fixed constant, $\Phi$ denotes the standard normal distribution function, $\g=E W_n^3=E X_1^3/\sqrt{n}$ and $C$ is a positive constant depending only on $t_0$ and $C_0$.
We refer to results such as \eq{14} and \eq{54} as Cram\'er-type moderate deviation results.
The range $0\leq x=o(n^{1/6})$ ($0\leq x=o(n^{1/4})$ resp.), for the relative error in \eq{14} (\eq{54} resp.) to vanish is optimal.
We refer to the modification of the normal distribution function in \eq{54} as skewness correction.

We are interested in extending the theory of skewness correction for tail probability approximations to sums of local statistics of independent random variables as follows.
For a positive integer $m$, let $\{X_1,\dots, X_m\}$ be a sequence of independent random variables.
Let  
$$W=\sum_{i=1}^n \xi_i, $$
where for each $i\in \{1,\dots, n\}$,  $\xi_i$ is a function of a small subset of $\{X_1,\dots, X_m\}$.
Absolute-error bounds in normal approximation for such $W$ are well studied in the literature.
See, for example, \cite{ChSh04} for results under a more general local dependence setting.
However, the accuracy of tail probability approximations for such $W$ is less well understood.
Recently, \cite{Zh19} considered Cram\'er-type moderate deviations as in \eq{14} for such $W$.
Our main result is a general relative-error bound (cf. \eq{13}) for $\Big| \frac{P(W>x)}{(1-\Phi(x))e^{\g x^3/6}} -1 \Big|$, where $\g=E W^3$,
under certain boundedness conditions (cf. \eq{11}).
For standardized sums of i.i.d., bounded random variables, our bound vanishes for the correct range $0\leq x=o(n^{1/4})$, although the rate is suboptimal (cf. \eq{46}).
We apply our main result to $k$-runs, U-statistics, and subgraph counts in the Erd\"os-R\'enyi random graph.
In each application, our bound vanishes for presumably the correct range of $x$ in terms of the system size.

We use Stein's method, which was introduced by \cite{St72} for normal approximation, to prove our main result.
\cite{ChGoSh11} provided an introduction to the method and a survey of its recent developments.
\cite{ChFaSh13} developed the method to prove Cram\'er-type moderate deviation results in normal approximation
for dependent random variables under a boundedness condition.
\cite{ChFaSh13b} and \cite{ShZhZh18} obtained Cram\'er-type moderate deviation results in Poisson approximation and non-normal approximations, respectively.
\cite{Zh19} refined the results in \cite{ChFaSh13} by relaxing the boundedness condition.
\cite[Chapter 4]{Br17} obtained a Cram\'er-type moderate deviation result in a higher-order approximation for the Erlang-C queuing model.
His proof relies heavily on explicit expressions of certain conditional expectations in the model.
To prove our general bound, we develop Stein's method for exponential concentration inequalities (cf. Proposition \ref{p2}) and for higher-order Cram\'er-type moderate deviations. For the latter, we use $P(Z_\g>x)$ in place of $(1-\Phi(x))e^{\g x^3/6}$ for an intermediate approximation, where $Z_\g$ follows a suitable standardized Poisson distribution.

Related results are available in the literature.
(a). Asymptotic expansions in the central limit theorem have been extensively studied.
See, for example, \cite{Pe75} for the classical Edgeworth expansion and \cite{Ba86} and \cite{RiRo03} for related expansions using Stein's method. These expansions require either a continuity condition on the random variable or a smoothness condition on certain test functions.
The $O(1/\sqrt{n})$ rate of convergence in the absolute-error bound for normal approximation for sums of $n$ independent discrete random variables generally can not be improved.
Nevertheless, \eq{54}, as well as our main result, shows that
it is still possible to improve the accuracy in terms of the relative error in tail probability approximations using an appropriate expansion.
(b). In the proof of our main result, we use a standardized Poisson distribution for an intermediate approximation.
Translated Poisson distributions have been proposed as alternatives to normal distributions to approximate lattice random variables in the total variation distance. See, for example, \cite{Ro05, Ro07}, \cite{BaLuXi18a, BaLuXi18b}, and \cite{BaXi18}.
Instead of matching the support of random variables as in these results, we use standardized Poisson distributions to correct for skewness.
See \cite{Ri09} for a similar use of standardized Poisson distributions.

The remainder of this paper is organized as follows.
In Section 2, we state the general relative-error bound in normal approximation with skewness correction for sums of local statistics of independent random variables and discuss applications to $k$-runs, U-statistics, and subgraph counts in the Erd\"os-R\'enyi random graph.
In Section 3, we prove an exponential concentration inequality, which is crucial to the proof of the general bound.
In Section 4, we prove the general bound.
\section{Main results}

\subsection{A general relative-error bound}

For a positive integer $N$, denote $[N]:=\{1,\dots, N\}$.
Let $m$ and $n$ be positive integers. Let $\{X_\alpha: \alpha\in [m]\}$ be a sequence of independent random variables.
Let $W=\sum_{i=1}^n \xi_i$, where each $\xi_i$ is a function of $\{X_\alpha: \alpha\in \mathcal{I}_i\}$ for some $\mathcal{I}_i\subset [m]$.
For $\alpha\in [m]$, let $N_\alpha=\{i\in [n]: \alpha\in \mathcal{I}_i\}$.

\begin{theorem}\label{t1}
Under the above setting, assume that $E \xi_i=0$ for each $i\in [n]$ and $\Var(W)=1$. Assume further that
\ben{\label{11}
|\xi_i|\leq \delta, \quad |\mathcal{I}_i|\leq s,\quad |N_\alpha|\leq d,
}
where $|\cdot|$ denotes the cardinality when applied to a set.
Denote $\g:=E W^3$.
Let $C_0$ be any fixed constant.
For
\ben{\label{25}
0\leq x\leq C_0  (m n s^4 d^4 \delta^5 ) ^{-1/2},
}
we have
\besn{\label{13}
\Big| \frac{P(W>x)}{(1-\Phi(x))e^{\g x^3/6}} -1 \Big|
\leq
C m n s^4 d^4 \delta^5(1+x^2),
}
where $C$ is a positive constant depending only on $C_0$.
\end{theorem}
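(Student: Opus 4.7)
The plan is to prove Theorem~\ref{t1} by introducing a standardized Poisson as an intermediate reference distribution whose first three moments match those of $W$, then using Stein's method to compare $W$ to this reference and classical asymptotics to compare the reference to $(1-\Phi(x))\e^{\gamma x^3/6}$. Concretely, set $\lambda = \gamma^{-2}$, let $Z \sim \Po(\lambda)$, and define $Z_\gamma = (Z - \lambda)/\sqrt{\lambda}$; a direct computation gives $E Z_\gamma = 0$, $\Var(Z_\gamma) = 1$, and $E Z_\gamma^3 = \gamma$. Write
\[
\frac{P(W > x)}{(1 - \Phi(x))\e^{\gamma x^3/6}} \;=\; \frac{P(W > x)}{P(Z_\gamma > x)} \cdot \frac{P(Z_\gamma > x)}{(1-\Phi(x))\e^{\gamma x^3/6}},
\]
and treat the two factors separately. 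The second factor is close to $1$ at the required rate by standard moderate deviation asymptotics for the Poisson tail (for which $\lambda = \gamma^{-2} \to \infty$ in the regime \eqref{25}), so the main task is to bound the first factor.

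For the first factor, I would develop Stein's method for $Z_\gamma$. The Poisson identity $E[Zg(Z)] = \lambda E[g(Z+1)]$ translates, after substituting $g(\cdot) = f((\cdot - \lambda)/\sqrt{\lambda})$, to $E\bigl[\gamma^{-1}(f(Z_\gamma + \gamma) - f(Z_\gamma)) - Z_\gamma f(Z_\gamma)\bigr] = 0$, leading to the Stein operator
\[
\mathcal{A}f(w) \;=\; \gamma^{-1}\bigl(f(w+\gamma) - f(w)\bigr) - wf(w),
\]
whose formal expansion $\mathcal{A}f \approx f' + (\gamma/2)f'' - wf$ is precisely the normal Stein operator augmented by a skewness-correcting term. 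Let $f_x$ solve $\mathcal{A}f_x(w) = \mathbf{1}_{w > x} - P(Z_\gamma > x)$. Then $P(W > x) - P(Z_\gamma > x) = E[\mathcal{A}f_x(W)]$, and the local dependence of $W$ yields the decomposition
\[
E[W f_x(W)] \;=\; \sum_{i=1}^n E\bigl[\xi_i(f_x(W) - f_x(W - W_i))\bigr],
\]
where $W_i = \sum_{j : \mathcal{I}_i \cap \mathcal{I}_j \neq \emptyset} \xi_j$. Taylor expanding one order beyond what is needed for normal approximation, the identity $\sum_i E[\xi_i W_i] = \Var(W) = 1$ makes the first-order term agree with $E[f_x'(W)]$, and the approximate identity $\sum_i E[\xi_i W_i^2] \approx \gamma$ cancels the skewness-correcting contribution from $\gamma^{-1}(f_x(W+\gamma)-f_x(W))$, leaving a remainder controlled by third- and fourth-order products of the $\xi_i$'s.

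The hard part is converting these residuals into a multiplicative rather than additive error with respect to $P(Z_\gamma > x)$. Two ingredients are essential: (i) sharp pointwise estimates on the discrete increments of $f_x$, reflecting the exponential tail decay of $Z_\gamma$ on the scale $\gamma$; and (ii) control of weighted moments of the form $E\bigl[|\xi_i|\, W_i^k\, \mathbf{1}_{W + \Delta_i > y}\bigr]$ by a small constant times $P(W > x)$, uniformly over relevant thresholds $y \leq x$ and perturbations $\Delta_i$ of size $\bigo(s\delta)$. The latter is exactly what the exponential concentration inequality in Proposition~\ref{p2} is designed to deliver, replacing the Chebyshev-type concentration that suffices for ordinary normal approximation. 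The combinatorial book-keeping over triples and quadruples of indices $i,j,k,\ell$ whose neighbourhoods share common $X_\alpha$'s then produces the factor $mns^4 d^4 \delta^5$ in the stated bound. This book-keeping is the main obstacle, since the usual local-dependence CLT arguments only need to expand to third order and do not need to match the skewness of $W$ against that of the reference distribution. Once assembled, the three pieces give the claimed relative-error bound \eqref{13} for $x$ in the range \eqref{25}.
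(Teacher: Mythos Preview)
Your overall architecture matches the paper's almost exactly: the same standardized Poisson $Z_\gamma$ as intermediate, the same Stein operator $\mathcal{A}f(w)=\gamma^{-1}(f(w+\gamma)-f(w))-wf(w)$, the same third-order local-dependence expansion so that $\Var(W)=1$ and $EW^3=\gamma$ cancel the leading terms, and the exponential concentration inequality (Proposition~\ref{p2}) to close the argument. Cram\'er's expansion for the Poisson tail handles the second factor, as you say.

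Two technical points, however, are either missing or mischaracterised and are essential to making the scheme work. First, you take the test function to be the raw indicator $\mathbf{1}_{w>x}$. The paper does not: it uses a $C^1$ smoothing $h_\alpha$ of the indicator with window $\alpha=ns^2d^2\delta^3$, and---crucially---the Stein solution $f$ is only naturally defined on the lattice $\mathcal{S}=\gamma\mathbb{Z}-1/\gamma$, so it must be extended to all of $\mathbb{R}$ (the paper uses a fifth-order polynomial interpolation matching discrete derivatives up to second order, and then verifies in Lemma~\ref{l6} that the extended $f$ satisfies the Stein equation only approximately). Without smoothing and this extension, the Taylor expansion ``one order beyond normal'' that you invoke has no meaning: $f''$ must exist and $f^{(3)}$ must be controllable, and both fail for the indicator solution. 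Second, your description of what Proposition~\ref{p2} delivers is off. It does not bound weighted moments $E[|\xi_i|W_i^k\mathbf{1}_{W+\Delta_i>y}]$; those are handled via the moment-generating-function bound (Proposition~\ref{p1}) and integration (Lemmas~\ref{l9} and \ref{l8}). Proposition~\ref{p2} bounds the small-ball probability $P(x\leq W\leq x+\varepsilon)$, and it enters precisely to control the smoothing error $P(|W-x|\leq C\alpha)$ that the step you omitted creates. So the two missing pieces are linked: the smoothing/extension generates a concentration term, and the concentration inequality kills it.
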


Clearly, applying the above result to $-W$ yields  
$$
\Big| \frac{P(W<-x)}{\Phi(-x)e^{-\g x^3/6}}-1   \Big| 
\leq
C m n s^4 d^4 \delta^5(1+x^2).
$$

To illustrate that the range of $x$ for the relative error in our approximation to vanish is correct, we first consider the standardized sums of i.i.d., bounded random variables.
Let $X_1, X_2, \dots$ be i.i.d.\ with $E X_i=0, E X_i^2=1, |X_i|\leq C_1<\infty$.
For an integer $n\geq 1$,
let $\xi_i=X_i/\sqrt{n}$ for each $i\in [n]$ and let $W=\sum_{i=1}^n \xi_i$.
This satisfies the assumptions in Theorem \ref{t1} with
\be{
m=n,\quad \delta=\frac{C_1}{\sqrt{n}},\quad s=1, \quad d=1, \quad \g=\frac{E X_1^3}{\sqrt{n}}.
}
Let $C_0$ be any fixed constant.
From \eq{13}, we have, for $0\leq x\leq C_0 n^{1/4}$,
\ben{\label{46}
\Big| \frac{P(W>x)}{(1-\Phi(x))e^{\g x^3/6}} -1 \Big|
\leq \frac{C}{\sqrt{n}}(1+x^2),
}
where $C$ is a positive constant depending only on $C_0$ and $C_1$.
Note that according to \eq{54},
the range $x=o(n^{1/4})$ for the relative-error bound in \eq{46} to vanish is optimal.
However, due to the suboptimality of our concentration inequality (cf. Remark \ref{r1}), our rate of convergence in \eq{46} is not optimal.

\begin{remark}\label{r1}
Corrections to the normal distribution function can be formally generalized by accounting for the 4th and higher cumulants.
However, one obstacle to obtaining a complete proof for even higher-order expansions is that our exponential concentration inequality (cf. Proposition \ref{p2}) is only useful in the range $x=o(n^{1/4})$.
\end{remark}

\subsection{Applications}

\subsubsection{$k$-runs}
Let $n>k>1$ be integers. Let $p\in (0,1)$.
Let $X_1,\dots, X_n$ be i.i.d.\ and $P(X_i=1)=1-P(X_i=0)=p$.
Let
\be{
W=\sum_{i=1}^n \xi_i,\  \xi_i=\frac{X_i X_{i+1}\cdots X_{i+k-1}-p^k}{\sigma},
}
where $\sigma$ is the normalizing constant such that $\Var(W)=1$, and $X_{n+i}:=X_i$ for $i\geq 1$.
It satisfies the assumptions in Theorem \ref{t1} with
\be{
m=n,\quad \de=\frac{1}{\sigma},\quad s=k, \quad d=k.
}
Therefore, we obtain:
\begin{proposition}\label{p3}
Let $\g=E W^3$ with the $W$ above. 
Let $C_0$ be any fixed constant.
We have, for $0\leq x\leq C_0 (\sigma^5/n^2 k^8)^{1/2}$,
\be
{
\max\left\{\Big| \frac{P(W<-x)}{\Phi(-x)e^{-\g x^3/6}}-1   \Big| ,\Big| \frac{P(W>x)}{(1-\Phi(x))e^{\g x^3/6}} -1 \Big|\right\}
\leq \frac{Cn^2 k^8}{\sigma^5}(1+x^2),
}
where $C$ is a positive constant depending only on $C_0$.
\end{proposition}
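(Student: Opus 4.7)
The plan is to obtain Proposition \ref{p3} as a direct specialization of Theorem \ref{t1}, with essentially all the work consisting of reading off the parameters $\delta$, $s$, $d$ and $m$ from the $k$-runs construction. Taking $m=n$ and $\mathcal{I}_i = \{i, i+1, \ldots, i+k-1\}$ with indices interpreted modulo $n$ (using the cyclic convention $X_{n+i} := X_i$ already imposed in the setup), each $\xi_i$ is a function of $\{X_\alpha : \alpha \in \mathcal{I}_i\}$, and $|\mathcal{I}_i| = k$, giving $s = k$. For any $\alpha \in [n]$, the set $N_\alpha$ is $\{i : \alpha \in \{i,\dots, i+k-1\}\} = \{\alpha-k+1, \ldots, \alpha\} \pmod n$, which also has cardinality $k$, so $d = k$.

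Next I would verify the remaining hypotheses in \eq{11}. The mean-zero condition $E\xi_i = 0$ follows from the independence of $X_i,\dots,X_{i+k-1}$, which gives $E[X_i \cdots X_{i+k-1}] = p^k$; the variance-one condition $\Var(W)=1$ holds by the choice of $\sigma$; and since $X_i \cdots X_{i+k-1} \in \{0,1\}$ and $p^k \in (0,1)$, we have $|X_i \cdots X_{i+k-1} - p^k| \leq 1$ and hence $|\xi_i| \leq 1/\sigma$, so $\delta = 1/\sigma$.

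Substituting $(m, s, d, \delta) = (n, k, k, 1/\sigma)$ into the admissible range \eq{25} produces
\[
0 \leq x \leq C_0 \bklr{n \cdot n \cdot k^4 \cdot k^4 \cdot \sigma^{-5}}^{-1/2} = C_0 \bklr{\sigma^5/(n^2 k^8)}^{1/2},
\]
which is exactly the range in Proposition \ref{p3}, and substituting into the error bound \eq{13} yields $C n^2 k^8 \sigma^{-5}(1+x^2)$, matching the stated upper bound. The symmetric lower-tail estimate for $P(W<-x)$ is immediate from the remark following Theorem \ref{t1}, since $-W$ satisfies the same hypotheses with third moment $-\gamma$.

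There is no genuine obstacle here: this proposition is a bookkeeping exercise that specializes the general bound. The only minor point requiring care is to confirm that the cyclic boundary convention does not inflate $|\mathcal{I}_i|$ or $|N_\alpha|$ beyond $k$, which is clear from the explicit description of these sets above.
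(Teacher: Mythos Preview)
Your proposal is correct and follows exactly the paper's approach: the paper simply records that the $k$-runs setup satisfies the assumptions of Theorem~\ref{t1} with $m=n$, $\delta=1/\sigma$, $s=k$, $d=k$, and then reads off the conclusion. Your write-up supplies the explicit verifications (of $|\mathcal{I}_i|$, $|N_\alpha|$, and the bound $|\xi_i|\leq 1/\sigma$) that the paper leaves implicit, and correctly invokes the remark after Theorem~\ref{t1} for the left tail.
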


In Propositions \ref{p3} and \ref{p4}, the formulation of the problem is not symmetric; therefore, we state the bound for both the left and right tail probabilities. The computation of $\sigma^2$ and $\gamma$ is not central to our study and is omitted from this and the next two examples.
If $k$ and $p$ are fixed, then the range of $x$ for the relative-error bound to vanish is $0\leq x=o(n^{1/4})$, which is presumably optimal in comparison to the i.i.d.\ case.

In the following, we provide empirical evidence of the advantage of skewness correction. Consider $k=2$. It can be computed that
\be{
\sigma^2=n(p^2+2p^3-3p^4)
}
and
\be{
\g=\frac{n}{\sigma^3}(p^2+6p^3-3p^4-24p^5+20p^6).
}
In the following table, we provide simulated values (based on $10^6$ repetitions) for
\be{
L_N:=\frac{P(W<-x)}{\Phi(-x)}-1,\quad L_{skew}:=\frac{P(W<-x)}{\Phi(-x)e^{-\g x^3/6}}-1,
}
and
\be{
R_N:=\frac{P(W>x)}{1-\Phi(x)}-1,\quad R_{skew}:=\frac{P(W>x)}{(1-\Phi(x))e^{\g x^3/6}}-1,
}
for the case $n=1500$ and $p=0.25$ and various values of $x$.
The table clearly shows that the tail probability approximations with skewness correction is much more accurate.

\begin{table}[h]
\caption{
$n=1500, p=0.25, \g\approx 0.138$.
Values of $L_N(L_{skew})$ and $R_N(R_{skew})$
based on $10^6$ repetitions.
}
\begin{center}
\begin{tabular}{|c|c|c|c|c|}
\hline
$x$ &$L_N$  & $L_{skew}$  & $R_N$  & $R_{skew}$  \\\hline
2 & -0.195  & -0.032 & 0.262    & 0.050   \\
2.5 & -0.238   & 0.093 & 0.344  & -0.063   \\
3 & -0.538  & -0.138 & 0.476   & -0.208   \\
3.5 & -0.811   & -0.491 & 1.201  & -0.182   \\
4 & -0.968   & -0.862 & 1.810   & -0.358   \\ \hline
        \end{tabular}

        \label{table1}
\end{center}
\end{table}

\subsubsection{U-statistics}

Let $X_1, X_2,\dots$ be a sequence of i.i.d.\ random variables from a fixed distribution.
Let $s\geq 2$ be a fixed integer.
Let $h: \mathbb{R}^s\to \mathbb{R}$ be a fixed, symmetric, Borel-measurable function.
We consider the \cite{Ho48} U-statistic
\be{
\sum_{1\leq i_1<\dots <i_s\leq m} h(X_{i_1},\dots, X_{i_s}).
}
Assume that
\be{
E h(X_1,\dots, X_s)=0,\   |h(X_1,\dots, X_s)|\leq C_1<\infty.
}
and the U-statistic is non-degenerate, namely,
\be{
E g^2(X_1)>0,
}
where
\be{
g(x):=E(h(X_1,\dots, X_s)|X_1=x).
}
Applying Theorem \ref{t1} to the U-statistic above yields the following result:
\begin{proposition}
In the above setting, let
\be{
W=\frac{1}{\sigma} \sum_{1\leq i_1<\dots <i_s\leq m} h(X_{i_1},\dots, X_{i_s}),
}
where
\be{
\sigma^2=\Var \big[  \sum_{1\leq i_1<\dots <i_s\leq m} h(X_{i_1},\dots, X_{i_s})   \big].
}
Let $\g=E W^3$.
Let $C_0$ be any fixed constant.
We have, for $0\leq x\leq C_0 m^{1/4}$,
\be{
\Big| \frac{P(W>x)}{(1-\Phi(x))e^{\g x^3/6}} -1 \Big|\\
\leq \frac{C}{\sqrt{m}}(1+x^2)
}
where $C$ is a positive constant depending only on $C_0$ and $h$.
\end{proposition}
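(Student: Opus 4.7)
The plan is to recognize the U-statistic as a sum of local statistics of independent random variables and apply Theorem \ref{t1} directly. Writing $W = \sum_{1\leq i_1 < \cdots < i_s \leq m} \xi_{i_1,\ldots,i_s}$ with $\xi_{i_1,\ldots,i_s} = h(X_{i_1},\ldots,X_{i_s})/\sigma$, I first identify the parameters appearing in the theorem. In the notation of Theorem \ref{t1}, the role of ``$m$'' is played by the number of underlying independent variables, namely $m$; the role of ``$n$'' is played by the number of summands $\binom{m}{s}$, of order $m^s$; the role of ``$s$'' is the fixed kernel order $s$; and since each $X_\alpha$ appears in exactly $\binom{m-1}{s-1}$ summands, the role of ``$d$'' is $\binom{m-1}{s-1}$, of order $m^{s-1}$. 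The sup-norm bound $|h| \leq C_1$ together with $E h = 0$ gives $|\xi_{i_1,\ldots,i_s}| \leq C_1/\sigma$, so one may take $\delta = C_1/\sigma$. The hypotheses $E\xi_i = 0$ and $\Var(W) = 1$ are enforced by construction.

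Next I would pin down the order of $\sigma$ via the standard Hoeffding covariance identity: for two ordered $s$-subsets $I, J \subset [m]$ with $|I \cap J| = k$, one has $\Cov(h(X_I), h(X_J)) = \zeta_k$, where $\zeta_1 = E g^2(X_1) > 0$ by non-degeneracy and $|\zeta_k| \leq C_1^2$ by the boundedness of $h$. Summing,
\[
\sigma^2 = \sum_{k=1}^{s} \binom{m}{s}\binom{s}{k}\binom{m-s}{s-k}\zeta_k,
\]
the $k=1$ term is of exact order $m^{2s-1}\, E g^2(X_1)$, while each of the $k\geq 2$ terms is of order at most $m^{2s-2}$. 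Hence $\sigma \asymp m^{s-1/2}$ and $\delta \asymp m^{-(s-1/2)}$, with the implicit constants determined by $h$.

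Substituting into the quantity governing Theorem \ref{t1} and absorbing the fixed constant $s$ into $C$,
\[
m n s^4 d^4 \delta^5 \asymp m \cdot m^s \cdot m^{4(s-1)} \cdot m^{-5(s-1/2)} = m^{-1/2},
\]
since the exponent telescopes to $1 + s + 4(s-1) - 5(s-\tfrac12) = -\tfrac12$. Consequently the admissible range $0 \leq x \leq C_0 (m n s^4 d^4 \delta^5)^{-1/2}$ translates to $0 \leq x \leq C_0' m^{1/4}$, and the relative-error bound translates to $C m^{-1/2}(1+x^2)$ with $C$ depending only on $C_0$ and $h$, yielding the claimed conclusion.

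The main obstacle is really just careful parameter bookkeeping; the only nontrivial input is the two-sided estimate $\sigma^2 \asymp m^{2s-1}$, which is a classical consequence of the Hoeffding decomposition for non-degenerate bounded U-statistics. No new technical ingredient beyond Theorem \ref{t1} and this standard variance calculation is required.
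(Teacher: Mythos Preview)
Your proposal is correct and follows essentially the same route as the paper: identify the parameters $n=\binom{m}{s}$, $d\leq m^{s-1}$, $\delta=C_1/\sigma$, invoke the non-degeneracy condition to get $\sigma^2\asymp m^{2s-1}$, and plug into Theorem~\ref{t1}. You supply more detail (the Hoeffding covariance expansion and the explicit exponent arithmetic), but the argument is the same.
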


\begin{proof}
The above $W$ satisfies the assumptions in Theorem \ref{t1} with
\be{
n={m \choose s},\quad \de=\frac{C_1}{\sigma}, \quad d\leq m^{s-1}.
}
By the non-degeneracy condition. $\sigma^2\asymp m^{2s-1}$.
The proposition then follows from \eq{13}.
\end{proof}

\begin{remark}
\cite{ChSh07} obtained a bound on the Kolmogorov distance in normal approximation for non-degenerate U-statistics.
The references therein comprise a large body of literature on the rate of convergence in normal approximation for U-statistics.
Our relative error bound for the skewness corrected tail probability approximation for U-statistics seems to be new.
\end{remark}

\subsubsection{Subgraph counts in the Erd\"os-R\'enyi random graph}

Let $K(N,p)$ be the Erd\H{o}s-R\'enyi random graph with $N$ vertices.
Each pair of vertices is connected with probability $p$ and remains disconnected with probability $1-p$, independent of all else.
Let $G$ be a given fixed graph.
For any graph $H$, let $v(H)$ and $e(H)$ denote the number of its vertices and edges, respectively.
Let $v=v(G), e=e(G)$.
Theorem \ref{t1} leads to the following result.
\begin{proposition}\label{p4}
Let $S$ be the number of copies (not necessarily induced) of $G$ in $K(N,p)$, and let $W=(S-E S)/\sqrt{\Var(S)}$ be the standardized version.
Let $\g=E W^3$.
Let $C_0$ be any fixed constant.
We have, for $0\leq x\leq C_0 [N^6 (1-p)^{5/2} p^{5e}/\psi^{5/2}]^{1/2}$,
\be{
\max\left\{\Big| \frac{P(W<-x)}{\Phi(-x)e^{-\g x^3/6}}-1   \Big| ,\Big| \frac{P(W>x)}{(1-\Phi(x))e^{\g x^3/6}} -1 \Big|\right\}\\
\leq \frac{C(G)\psi^{5/2}}{(1-p)^{5/2} p^{5e}N^6}(1+x^2),
}
where $C(G)$ is a constant depending only on $C_0$ and $G$, and
\be{
\psi=\min_{H\subset G, e(H)>0} \{N^{v(H)}p^{e(H)}\}.
}
\end{proposition}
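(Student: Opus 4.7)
The plan is to cast $W$ as a sum of local statistics and apply Theorem \ref{t1} directly. Label the $m=\binom{N}{2}$ possible edges of $K(N,p)$ by $\alpha$, and let $X_\alpha\sim\Be(p)$ be the independent edge indicators; these are the underlying independent random variables. Enumerate the $n$ copies of $G$ in the complete graph on $N$ vertices, so that $n = N(N-1)\cdots(N-v+1)/|\mathrm{Aut}(G)|\asymp N^v$. For the $i$-th copy, let $Y_i$ be its indicator and set $\xi_i=(Y_i-\E Y_i)/\sigma$ with $\sigma=\sqrt{\Var(S)}$. Then $S-\E S=\sigma\sum_{i=1}^n \xi_i$, so $W=\sum_i\xi_i$ is centered with unit variance, as required by Theorem \ref{t1}.

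The structural parameters of Theorem \ref{t1} are then read off. Each $\xi_i$ depends only on the $e$ edge indicators of its copy, giving $|\mathcal{I}_i|\leq s:=e$. The number of copies of $G$ through a fixed edge is bounded by $C(G)\, N^{v-2}$, so $|N_\alpha|\leq d\leq C(G)\, N^{v-2}$. Since $|Y_i-\E Y_i|\leq 1$, we have $|\xi_i|\leq 1/\sigma=:\delta$.

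The essential quantitative input is the classical variance asymptotic for subgraph counts,
\be{
\sigma^2\asymp \frac{N^{2v}p^{2e}(1-p)}{\psi},
}
with implied constants depending only on $G$; hence $\delta\asymp \sqrt{\psi}/(N^v p^e\sqrt{1-p})$. Substituting these values into the general factor $m n s^4 d^4 \delta^5$ from Theorem \ref{t1} yields
\be{
m n s^4 d^4 \delta^5\asymp N^2\cdot N^v\cdot N^{4(v-2)}\cdot \frac{\psi^{5/2}}{N^{5v}p^{5e}(1-p)^{5/2}}=\frac{\psi^{5/2}}{(1-p)^{5/2}p^{5e}N^6},
}
which matches both the range of $x$ permitted in \eq{25} and the right-hand side of \eq{13}. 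Theorem \ref{t1} therefore yields the stated bound on $|P(W>x)/[(1-\Phi(x))e^{\g x^3/6}]-1|$, and applying the same argument to $-W$ delivers the twin bound on the left tail.

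The only non-routine step is the variance asymptotic. It comes from decomposing $\Var(S)$ according to how two copies of $G$ overlap: a pair of copies sharing exactly a subgraph isomorphic to $H\subset G$ with $e(H)>0$ contributes of order $N^{2v-v(H)}p^{2e-e(H)}(1-p)$, and summing over all such $H$ gives $\sigma^2\asymp N^{2v}p^{2e}(1-p)/\psi$. This is standard in the \ER{} literature, and once it is granted, the remainder of the argument is purely bookkeeping with the parameters in Theorem \ref{t1}.
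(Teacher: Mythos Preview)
Your proposal is correct and follows essentially the same route as the paper: both arguments write $W$ as a standardized sum of the centered copy-indicators, read off $m=\binom{N}{2}$, $n\asymp N^v$, $s=e$, $d\leq C(G)N^{v-2}$, $\delta=1/\sigma$, invoke the lower bound $\sigma^2\geq C(G)(1-p)N^{2v}p^{2e}/\psi$ (which you state as $\asymp$; only the lower bound is needed), and plug into Theorem~\ref{t1}. The paper simply cites \cite{BaKaRu89} for the variance bound where you sketch the overlap decomposition, but otherwise the two proofs coincide.
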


\begin{proof}
In this proof, $C$ denotes positive constants that are allowed to depend on $C_0$ and the given fixed graph $G$.
Let the potential edges of $K(N,p)$ be denoted by $(e_1,\dots, e_{{N\choose 2}})$.
In applying Theorem \ref{t1}, let $W=\sum_{i\in I} X_i$,
where the index set is
\be{
I=\Big\{ i=(i_1,\dots, i_e): 1\leq i_1<\dots <i_e\leq {N\choose 2}, G_i:=(e_{i_1}, \dots, e_{i_e}) \ \text{is a copy of $G$}    \Big\},
}
\be{
X_i=\sigma^{-1} \big(  Y_i  -p^e\big),\quad \sigma^2:=\Var(S),\quad Y_i=\Pi_{l=1}^e E_{i_l},
}
and $E_{i_l}$ is the indicator of the event that the edge $e_{i_l}$ is connected in $K(N,p)$.
The above $W$ satisfies the assumptions in Theorem \ref{t1} with
\be{
n:=|I|\leq N^v,\quad m={N \choose 2},\quad \de=\frac{1}{\sigma},\quad s\leq C,\quad d\leq C N^{v-2}.
}
It is known that (cf. (3.7) of \cite{BaKaRu89})
\be{
\sigma^2\geq C (1-p) N^{2v} p^{2e} \psi^{-1}.
}
The proposition then follows from \eq{13}.
\end{proof}

\begin{remark}
\cite{BaKaRu89} first studied normal approximation for the above $W$ using Stein's method.
Because $\psi\leq N^2 p$, if $p$ is fixed, then the range of $x$ for the relative error to vanish is $o(N^{1/2})$.
It is larger than the range of $o(N^{1/3})$, for which \cite{Zh19} proved that the relative error in normal approximation vanishes.
\end{remark}

\section{Exponential concentration inequality}

\subsection{Preliminaries}

Let $\{X_\alpha': \alpha\in [m]\}$ be an independent copy of $\{X_\alpha: \alpha\in [m]\}$.
For each $\alpha\in [m]$, let $W^{\{\alpha\}}$ be defined as for $W$ at the beginning of Section 2.1, except by changing $X_\alpha$ to $X_\alpha'$.
We have
\ben{\label{6}
\mathcal{L}(W,W^{\{\alpha\}})=\mathcal{L}(W^{\{\alpha\}},W).
}
From \eq{11}, we have
\ben{\label{49}
|W-W^{\{\alpha\}}|\leq 2d\de.
}
By the Efron-Stein inequality, we have
\ben{\label{5}
C_2:=\sum_{\alpha=1}^m E(W-W^{\{\alpha\}})^2\geq 2\Var(W)=2.
}
Moreover, it is straightforward to verify that $1\leq n\de$ and
\ben{\label{47}
1=\Var(W)\leq n sd \de^2,\ \sum_{\alpha=1}^m E(W-W^{\{\alpha\}})^2\leq 4m d^2 \de^2,\ |\g|\leq 4ns^2d^2\de^3.
}

We have the following local dependence structure for $W$.
\underline{(LD1)}: For $i\in [n]$, let $A_i=\{j\in [n]: \mathcal{I}_j\cap \mathcal{I}_i\ne \emptyset\}$; hence, $\xi_i$ is independent of $\{\xi_j: j\notin A_i\}$. \underline{(LD2)}: For $i\in [n]$ and $j\in A_i$, let $A_{ij}=\{k\in [n]: \mathcal{I}_k\cap (\mathcal{I}_i\cup \mathcal{I}_j)\ne \emptyset\}$; hence, $\{\xi_i, \xi_j\}$ is independent of $\{\xi_k: k\notin A_{ij}\}$. \underline{(LD3)}: For $i\in [n]$, $j\in A_i$ and $k\in A_{ij}$, let $A_{ijk}=\{l\in [n]: \mathcal{I}_l\cap (\mathcal{I}_i\cup \mathcal{I}_j\cup \mathcal{I}_k)\ne \emptyset\}$; hence, $\{\xi_i, \xi_j, \xi_k\}$ is independent of $\{\xi_l: l\notin A_{ijk}\}$.
From \eq{11}, we have
\ben{\label{48}
|A_i|, |A_{ij}|, |A_{ijk}|\leq 3sd.
}
For $A\subset [n]$, denote $\xi_A=\sum_{i\in A} \xi_i$ and $\xi_{i}:=\xi_{\{i\}}$.
We have
\ben{\label{50}
\gamma=E W^3=2\sum_{i=1}^n \sum_{j\in A_i} \sum_{k\in A_{ij}} E\xi_i\xi_{j}\xi_{k}-\sum_{i=1}^n \sum_{j\in A_i} E\xi_i \xi_{j}^2.
}
Let
\be{
V_1=\sum_{\alpha=1}^m (W-W^{\{\alpha\}}),\quad V_2=\sum_{\alpha=1}^m (W-W^{\{\alpha\}})^2.
}
\begin{lemma}\label{l10}
Regard $V_1$ and $V_2$ as functions of the independent random variables $\{X_\alpha: \alpha\in [m]\}\cup \{X_\alpha': \alpha\in [m]\}$.
For some $\beta\in [m]$, if we change $X_\beta$ or $X_\beta'$ to another independent copy, $V_1$ is changed by at most $2 s d \de$, and $V_2$ is changed by at most $4 s d^2 \de^2$.
\end{lemma}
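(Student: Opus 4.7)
The plan is to write $V_1$ and $V_2$ explicitly as sums indexed by pairs $(\alpha, i)$ with $i \in N_\alpha$, identify which summands depend on the variable being perturbed, and bound the total change using the uniform bound $|\xi_i| \leq \de$ together with the cardinality bounds $|\mathcal{I}_i| \leq s$ and $|N_\alpha| \leq d$ from \eqref{11}. Writing $\xi_i^{\{\alpha\}}$ for the value of $\xi_i$ after $X_\alpha$ has been replaced by $X_\alpha'$, so that $W^{\{\alpha\}} = \sum_i \xi_i^{\{\alpha\}}$ and $W - W^{\{\alpha\}} = \sum_{i \in N_\alpha}(\xi_i - \xi_i^{\{\alpha\}})$, I would start from
\[
V_1 \;=\; \sum_{\alpha \in [m]} \sum_{i \in N_\alpha} (\xi_i - \xi_i^{\{\alpha\}})
\]
and note that a summand depends on $X_\beta$ only through $\xi_i$ (requiring $i \in N_\beta$) or through $\xi_i^{\{\alpha\}}$ (requiring $i \in N_\beta$ and $\alpha \neq \beta$, since for $\alpha = \beta$ the variable $X_\beta$ has been replaced by its independent copy $X_\beta'$).

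The crucial observation is a cancellation. If $X_\beta$ is replaced by another independent copy $\tilde X_\beta$, producing perturbed values $\tilde W$ and $\tilde W^{\{\alpha\}}$, then for $\alpha \neq \beta$ and $i \in N_\beta \setminus N_\alpha$ we have $\xi_i = \xi_i^{\{\alpha\}}$ (as $X_\alpha$ is not used by $\xi_i$) and $\tilde\xi_i = \tilde\xi_i^{\{\alpha\}}$, so these indices contribute zero to
\[
(\tilde W - \tilde W^{\{\alpha\}}) - (W - W^{\{\alpha\}}) \;=\; \sum_{i \in N_\beta \cap N_\alpha} \bigl[(\tilde\xi_i - \xi_i) - (\tilde\xi_i^{\{\alpha\}} - \xi_i^{\{\alpha\}})\bigr].
\]
The right-hand side has modulus at most $4\de\,|N_\beta \cap N_\alpha|$, and summing over $\alpha \neq \beta$ together with $\sum_{\alpha\neq\beta}|N_\beta \cap N_\alpha| = \sum_{i \in N_\beta}(|\mathcal{I}_i|-1) \leq sd$, plus the $\alpha = \beta$ contribution $|\tilde W - W| \leq 2d\de$, yields the desired $O(sd\de)$ bound. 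Replacing $X_\beta'$ instead is handled symmetrically by swapping the roles of $W$ and $W^{\{\beta\}}$ via the exchangeability \eqref{6}.

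For $V_2$, I would apply the identity $a^2 - b^2 = (a-b)(a+b)$ to each summand, with $a = W - W^{\{\alpha\}}$ and $b$ its value after the perturbation. The factor $(a - b)$ is controlled exactly by the computation above, while $|a|, |b| \leq 2d\de$ by \eqref{49}, so each $\alpha$-summand of $V_2$ changes by at most $4d\de$ times the change in $W - W^{\{\alpha\}}$. Summing over $\alpha$ then gives the claimed $O(sd^2\de^2)$ bound. The only genuine difficulty is the combinatorial bookkeeping: the pair sum $\sum_\alpha \sum_{i \in N_\alpha}$ must be reorganised as $\sum_i \sum_{\alpha \in \mathcal{I}_i}$ at the right moment to expose the cancellation on $N_\beta \setminus N_\alpha$, as otherwise one picks up a spurious extra factor of $s$ from a naive triangle-inequality bound on each $W - W^{\{\alpha\}}$ separately.
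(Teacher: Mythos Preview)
Your argument is correct and, for $V_1$, coincides with the paper's: both rewrite the double sum as $\sum_{i}\sum_{\alpha\in\mathcal{I}_i}(\xi_i-\xi_i^{\{\alpha\}})$ and observe that only the at most $sd$ pairs with $i\in N_\beta$ are affected by perturbing $X_\beta$.

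For $V_2$ you take a slightly different route. The paper expands the square to obtain a triple sum
\[
V_2=\sum_{i,j:\,\mathcal{I}_i\cap\mathcal{I}_j\ne\emptyset}\ \sum_{\alpha\in\mathcal{I}_i\cap\mathcal{I}_j}(\xi_i-\xi_i^{\{\alpha\}})(\xi_j-\xi_j^{\{\alpha\}})
\]
and counts the affected triples $(i,j,\alpha)$ directly, whereas you factor $a^2-b^2=(a-b)(a+b)$ termwise in $\alpha$, bound $|a+b|\le 4d\de$ via \eqref{49}, and recycle the per-$\alpha$ estimates on $|a-b|$ already obtained for $V_1$. Both arguments are elementary and give the same order $sd^2\de^2$; yours is a bit more economical since it reuses the $V_1$ work, while the paper's makes the combinatorial structure of $V_2$ explicit. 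Two small remarks: the perturbation of $X_\beta'$ is in fact simpler than you indicate, since $X_\beta'$ enters $V_1,V_2$ only through the single summand $\alpha=\beta$, so no appeal to \eqref{6} is needed; and your closing comment about a ``spurious extra factor of $s$'' slightly misdiagnoses the issue (the reindexing is needed to count the relevant $\alpha$'s, not to reveal a hidden cancellation), though this does not affect the proof.
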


\begin{proof}[Proof of Lemma \ref{l10}]
For each $\alpha\in [m]$, define $\{\xi_i^{\{\alpha\}}: i\in [n]\}$ as for $\{\xi_i: i\in [n]\}$, except by changing $X_\alpha$ to $X_\alpha'$. We have
\be{
W^{\{\alpha\}}=\sum_{i=1}^n \xi_i^{\{\alpha\}}.
}
From the definition of $N_\alpha$ and $\mathcal{I}_i$, we have
\be{
V_1=\sum_{\alpha=1}^m \sum_{i\in [n]} (\xi_i-\xi_i^{\{\alpha\}})=\sum_{\alpha=1}^m \sum_{i\in N_\alpha} (\xi_i-\xi_i^{\{\alpha\}})=\sum_{i=1}^n \sum_{\alpha\in \mathcal{I}_i} (\xi_i-\xi_i^{\{\alpha\}} ).
}
Changing $X_\beta$ or $X_\beta'$ only affects $(\xi_i-\xi_i^{\{\alpha\}} )$ if $i\in N_\beta$, which has cardinality at most $d$ by \eq{11}. From $|\mathcal{I}_i|\leq s$ and $|\xi_i-\xi_i^{\{\alpha\}}|\leq 2\de$ (cf. \eq{11}), $V_1$ is changed by at most $2sd\de$.

Now we turn to $V_2$. We have
\bes{
V_2=&\sum_{\alpha=1}^m \big[\sum_{i\in N_\alpha} (\xi_i-\xi_i^{\{\alpha\}}) \big]^2\\
=& \sum_{\alpha=1}^m \sum_{i, j \in N_\alpha}(\xi_i-\xi_i^{\{\alpha\}}) (\xi_j-\xi_j^{\{\alpha\}})\\
=&\sum_{i, j \in N_\alpha \atop \mathcal{I}_i\cap \mathcal{I}_j \ne \emptyset}
\sum_{\alpha\in \mathcal{I}_i\cap \mathcal{I}_j} (\xi_i-\xi_i^{\{\alpha\}}) (\xi_j-\xi_j^{\{\alpha\}}).
}
Reasoning similar to that for $V_1$ above leads to the observation that changing $X_\beta$ or $X_\beta'$ changes $V_2$ by at most $4sd^2 \de^2$.
\end{proof}

\subsection{Moment generating function bound}

\begin{proposition}\label{p1}
Let $C_0$ be any fixed constant.
Under the assumptions in Theorem \ref{t1},
for
\be{
0\leq t \leq C_0  (n s^2 d^2 \de^3 ) ^{-1/2},
}
we have
\ben{\label{4}
E e^{tW}\leq C\exp(\frac{t^2}{2}+\frac{\gamma t^3}{6}),
}
where $C$ is a positive constant depending only on $C_0$.
\end{proposition}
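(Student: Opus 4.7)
The plan is to convert the bound \eqref{4} into a differential inequality for $f(t) := E[e^{tW}]$ and integrate. Taking logs, \eqref{4} is equivalent to $\log f(t) \le t^2/2 + \gamma t^3/6 + \log C$. Since $f > 0$ and $\log f(0)=0$, it suffices to derive
\[
\frac{f'(t)}{f(t)} \le t + \frac{\gamma t^2}{2} + \varepsilon(t),
\]
where $\int_0^{t_0}\varepsilon(s)\,ds$ is bounded by a constant depending only on $C_0$, with $t_0 := C_0(ns^2d^2\delta^3)^{-1/2}$.

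The core computation is the expansion of $f'(t) = E[We^{tW}] = \sum_{i=1}^n E[\xi_i e^{tW}]$. For each $i$, invoke (LD1): setting $\tilde W_i := \sum_{k\notin A_i}\xi_k$, the independence $\xi_i\indep\tilde W_i$ combined with $E\xi_i=0$ gives $E[\xi_i e^{t\tilde W_i}]=0$, whence
\[
E[\xi_i e^{tW}] = E\bigl[\xi_i\bigl(e^{tW}-e^{t\tilde W_i}\bigr)\bigr].
\]
A second-order Taylor expansion of $e^{tW}=e^{t(\tilde W_i+\xi_{A_i})}$ around $e^{t\tilde W_i}$ yields
\[
E[\xi_i e^{tW}] = t\,E[\xi_i\xi_{A_i}e^{t\tilde W_i}] + \tfrac{1}{2}t^2\,E[\xi_i\xi_{A_i}^2 e^{t\tilde W_i}] + R_i(t),
\]
with $|R_i(t)|\le \tfrac{1}{6}t^3\,|\xi_i|\,|\xi_{A_i}|^3 e^{tW^*_i}$ for some $W^*_i$ between $\tilde W_i$ and $W$.

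The first summand is processed one level deeper using (LD2): for each $(i,j)$ with $j\in A_i$, setting $\tilde W_{ij}:=\sum_{k\notin A_{ij}}\xi_k$ and invoking $\xi_i\xi_j\indep \tilde W_{ij}$,
\[
E[\xi_i\xi_j e^{t\tilde W_i}] = E[\xi_i\xi_j]\,E[e^{t\tilde W_{ij}}] + E\bigl[\xi_i\xi_j(e^{t\tilde W_i}-e^{t\tilde W_{ij}})\bigr].
\]
The factor $E[e^{t\tilde W_{ij}}]$ is matched to $f(t)$ through the identity $e^{t\tilde W_{ij}} = e^{tW}e^{-t\xi_{A_{ij}}}$ combined with $|\xi_{A_{ij}}|\le 3sd\delta$. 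Summing together with the identity $\sum_{i}\sum_{j\in A_i}E[\xi_i\xi_j]=E[W^2]=1$ produces the main term $tf(t)$. The same device applied to the $t^2$ summand, one further level deeper using (LD3), together with identity \eqref{50} for $\gamma$, produces the skewness contribution $\tfrac{1}{2}\gamma t^2 f(t)$.

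The principal obstacle is uniform control of the remainder expectations of the form $E[(\cdots)e^{tW^*}]$ by a constant multiple of $f(t)$. The key pointwise estimate is $e^{tW^*_i}\le e^{tW}\vee e^{t\tilde W_i}\le e^{tW}e^{3tsd\delta}$, obtained from $|\xi_{A_i}|\le 3sd\delta$; the prefactor $e^{3tsd\delta}$ is $O(1)$ because the range \eqref{25} together with $n\delta\ge 1$ (cf.\ \eqref{47}) gives $tsd\delta\le C_0$. After this bound, each nested remainder takes the schematic form $t^k n s^a d^b\delta^c f(t)$ with $k\in\{2,3\}$ and exponents $(a,b,c)$ arranged so that its integral against $dt$ over $[0,t_0]$ is $O(1)$. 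The principal source of technical work is the bookkeeping of three levels of nested Taylor expansions and the careful identification of the extracted main coefficients with $\mathrm{Var}(W)=1$ and $\gamma$ via \eqref{50}.
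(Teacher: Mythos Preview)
Your overall strategy---differentiate $f(t)=Ee^{tW}$, peel off dependency neighborhoods via (LD1)--(LD3), Taylor-expand, extract $tf(t)$ and $\tfrac{\gamma}{2}t^2f(t)$, and integrate the resulting differential inequality---is exactly the paper's. The gap is in where $\gamma$ comes from. The quadratic Taylor summand $\tfrac{t^2}{2}\sum_i E[\xi_i\xi_{A_i}^2 e^{t\tilde W_i}]$ alone does \emph{not} produce $\tfrac{\gamma}{2}t^2 f(t)$: its leading coefficient is $\tfrac12\sum_i E[\xi_i\xi_{A_i}^2]$, which in general differs from $\gamma/2$ (identity \eqref{50} involves $A_{ij}$, not just $A_i$; already for a three-term chain $\xi_1(X_1),\,\xi_2(X_1,X_2),\,\xi_3(X_2)$ one gets $\gamma-\sum_iE[\xi_i\xi_{A_i}^2]=4E[\xi_1\xi_2\xi_3]\ne 0$). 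The missing part of $\gamma$ sits inside the two linear-term residuals you wrote down, namely $t\sum_{i,j}E[\xi_i\xi_j]\bigl(E[e^{t\tilde W_{ij}}]-f(t)\bigr)$ and $t\sum_{i,j}E[\xi_i\xi_j(e^{t\tilde W_i}-e^{t\tilde W_{ij}})]$. If you merely bound these via $|\xi_{A_{ij}}|\le 3sd\delta$ as you describe, each is $O(ns^2d^2\delta^3\,t^2) f(t)$, and $\int_0^{t_0}ns^2d^2\delta^3\,t^2\,dt\asymp (ns^2d^2\delta^3)^{-1/2}$ is unbounded (of order $n^{1/4}$ in the i.i.d.\ case). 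So your claim that the nested remainders integrate to $O(1)$ fails exactly at $k=2$.

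The fix, carried out in the paper, is to Taylor-expand these linear-term residuals one more order (i.e., expand $e^{t\tilde W_i}-e^{t\tilde W_{ij}}$ and $e^{tW}-e^{t\tilde W_{ij}}$ to first order in $t\xi_{A_{ij}}$), extract their $t^2$ contributions, and combine them with the quadratic Taylor summand via \eqref{50} to obtain the correct $\tfrac{\gamma}{2}t^2f(t)$. After this all surviving errors are $O(ns^3d^3\delta^4\,t^3) f(t)$, and now $\int_0^{t_0}ns^3d^3\delta^4\,t^3\,dt=O\bigl((nsd\delta^2)^{-1}\bigr)=O(1)$ by \eqref{47}.
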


\begin{proof}
In this proof, $C$ denotes positive constants that can depend on $C_0$, $O(a)$ denotes a quantity such that $|O(a)|\leq C a$.
Let $h(t)=E e^{tW}$.
Note that from $t=O(1)(ns^2d^2\de^3)^{-1/2}$ and $n\de\geq 1$, we have
\ben{\label{53}
sd\de t=O(1).
}
Because $\xi_i$ is independent of $W-\xi_{A_i}$ by (LD1), $E \xi_i=0$, $|A_i|\leq Csd$ from \eq{48} and $|\xi_i|\leq \delta$ from \eq{11}, we have
\besn{\label{1}
h'(t)=& E W e^{tW}=\sum_{i=1}^n E \xi_i e^{tW}=\sum_{i=1}^n E \xi_i [e^{tW}-e^{t(W-\xi_{A_i})}]\\
=&\sum_{i=1}^n E \xi_i\big[ \xi_{A_i} t e^{tW} -\frac{\xi_{A_i}^2}{2} t^2 e^{tW} +O(s^3d^3 \delta^3 t^3 e^{tW+Csd \delta t})     \big].
}
For the first term on the right-hand side of \eq{1}, we have,
recalling $\sum_{i=1}^n E \xi_i \xi_{A_i}=E W^2=1$ and using similar arguments as above for the error term,
\besn{\label{2}
&\sum_{i=1}^n E \xi_i \xi_{A_i} t e^{tW}\\
=& \sum_{i=1}^n \sum_{j\in A_i}E \xi_i \xi_{j} t E e^{t(W-\xi_{A_{ij}})}+\sum_{i=1}^n \sum_{j\in A_i} E \xi_i \xi_{j} t [e^{tW}-e^{t(W-\xi_{A_{ij}})}]\\
=& th(t)+\sum_{i=1}^n \sum_{j\in A_i} E \xi_i \xi_{j} t E [e^{t(W-\xi_{A_{ij}})}-e^{tW}]
+\sum_{i=1}^n \sum_{j\in A_i} E \xi_i \xi_{j} t [e^{tW}-e^{t(W-\xi_{A_{ij}})}]\\
=& t h(t) +\sum_{i=1}^n \sum_{j\in A_i} E [\xi_i \xi_{j}-E \xi_i \xi_{j}] \xi_{A_{ij}} t^2 e^{tW}+O(ns^3d^3 \delta^4 t^3 e^{tW+Csd\delta t}).
}
For the second terms on the right-hand of \eq{1} and of \eq{2}, we have, by recalling \eq{50},
\besn{\label{3}
&\sum_{i=1}^n E[\xi_i \xi_{A_i} \xi_{A_{ij}}-(E \xi_i \xi_{A_i}) \xi_{A_{ij}}-\xi_i \xi_{A_i}^2/2] t^2 e^{tW}\\
=&\sum_{i=1}^n \sum_{j\in A_i}\sum_{k\in A_{ij}}  E[\xi_i \xi_{j} \xi_{k}-(E \xi_i \xi_{j}) \xi_{k}-\frac{\xi_i \xi_{j} \xi_k I(k=j)}{2}] t^2 E e^{t(W-\xi_{A_{ijk}})}\\
&+ \sum_{i=1}^n \sum_{j\in A_i}\sum_{k\in A_{ij}}  E[\xi_i \xi_{j} \xi_{k}-(E \xi_i \xi_{j}) \xi_{k}-\frac{\xi_i \xi_{j} \xi_k I(k=j)}{2}] t^2  [e^{tW}-e^{t(W-\xi_{A_{ijk}})}]\\
=&\gamma \frac{t^2 h(t)}{2}+O(ns^3d^3\delta^4 t^3 e^{tW+Csd\delta t}).
}
Combining \eq{1}, \eq{2} and \eq{3}, we have
\ben{\label{22}
h'(t)=th(t)+\gamma \frac{t^2 h(t)}{2} +O(ns^3 d^3\delta^4 t^3  e^{Csd \delta t})h(t).
}
Recall \eq{53}. Because $h(0)=1$, we have
\bes{
\log h(t)=&\int_0^t [u+\gamma u^2/2+O(ns^3d^3\delta^4 u^3)] du\\
=&\frac{t^2}{2}+\frac{\gamma t^3}{6}+O(ns^3d^3\delta^4 t^4).
}
This implies \eq{4} because from $t = O(1)  (n s^2 d^2 \de^3 ) ^{-1/2}$ and \eq{47}, we have
\ben{\label{52}
ns^3d^3\delta^4 t^4\leq \frac{Cns^3d^3\delta^4}{n^2s^4d^4\de^6}=\frac{C}{nsd\de^2}\leq C.
}
\end{proof}

\subsection{Exponential concentration inequality}

What we call a concentration inequality here is a smoothing inequality originally used in normal approximation by \cite{Es45}. It was developed via Stein's method in, for example, \cite{HoCh78} and \cite{ChSh04}. \cite{Sh10} developed exponential concentration inequalities in normal approximation for non-linear statistics.

\begin{proposition}\label{p2}
Let $C_0$ be any fixed constant.
Under the assumptions of Theorem \ref{t1},
for $d\de\leq 1/2$ and
\be{
1\leq x \leq C_0  (n s^2 d^2 \de^3 ) ^{-1/2},
}
we have, for any $\varepsilon>0$,
\be{
P(x\leq W\leq x+\varepsilon)\leq Cm s^2 d^2 \de^2 (\varepsilon+d \de) e^{\varepsilon x} x \exp(-\frac{x^2}{2}+\frac{\gamma x^3}{6}) + \exp(-\frac{1}{Cms^2d^4\de^4}),
}
where $C$ is a positive constant depending only on $C_0$.
\end{proposition}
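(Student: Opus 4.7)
My plan is to combine three ingredients: (1) a bounded-differences concentration estimate for $V_2$ producing the additive second term, (2) a Stein-type smoothing identity exploiting the local dependence structure of $W$, and (3) the skewness-corrected moment-generating-function bound of Proposition \ref{p1} to convert the smoothing inequality into an exponential Chernoff-type estimate.

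First, Lemma \ref{l10} asserts that $V_2$, viewed as a function of the $2m$ independent coordinates $\{X_\alpha: \alpha\in[m]\}\cup\{X_\alpha': \alpha\in[m]\}$, satisfies the bounded-differences property with constant $4sd^2\delta^2$. Combined with the Efron--Stein lower bound $EV_2\ge 2$ from \eqref{5}, McDiarmid's inequality applied to $-V_2$ yields
\be{
P(V_2\le 1)\le \exp\!\bigl(-(EV_2-1)^2/\bigl(2\cdot 2m\cdot (4sd^2\delta^2)^2\bigr)\bigr)\le \exp\!\bigl(-1/(Cms^2d^4\delta^4)\bigr),
}
which matches the additive term in the conclusion. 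The remainder of the argument proceeds on the event $\{V_2>1\}$.

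Next, set $t=x$ and introduce a non-decreasing, absolutely continuous test function $f$ that vanishes on $(-\infty, x-2d\delta]$, satisfies $f'(w)=e^{tw}$ on $[x,x+\varepsilon]$, and is smoothly interpolated on two buffer strips of width $2d\delta$; the buffers accommodate the maximal jump $|W-W^{\{\alpha\}}|\le 2d\delta$ from \eqref{49}. One checks that $\|f\|_\infty\le C(\varepsilon+d\delta)e^{t(x+\varepsilon)}$ and $f'(w)\ge e^{tx}\,I(x\le w\le x+\varepsilon)$. Starting from the antisymmetry $\sum_\alpha E[(W-W^{\{\alpha\}})(f(W)+f(W^{\{\alpha\}}))]=0$, expanding $f(W^{\{\alpha\}})-f(W)$ to first order in $W^{\{\alpha\}}-W$, and performing local-dependence manipulations analogous to those in \eqref{1}--\eqref{3}, one derives an identity of the form
\be{
\tfrac12 E[f'(W)V_2]=E[Wf(W)]+\mathrm{Rem},
}
with $|\mathrm{Rem}|\le Cms^2d^2\delta^2$ times an MGF-type quantity, the extra factor arising from bounding the cubic-order differences of $W-W^{\{\alpha\}}$ via $|W-W^{\{\alpha\}}|\le 2d\delta$ together with $EV_2\le 4md^2\delta^2$ and the local-dependence counting $|A_i|\le 3sd$ from \eqref{48}. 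Because $V_2/2\ge 1/2$ on $\{V_2>1\}$, this implies
\be{
e^{tx}\,P(x\le W\le x+\varepsilon,\,V_2>1)\le E[f'(W)I(V_2>1)]\le 2E[Wf(W)]+2|\mathrm{Rem}|.
}

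Finally, since $f(w)\le C(\varepsilon+d\delta)e^{tw}$, we have $E[Wf(W)]\le C(\varepsilon+d\delta)E[|W|e^{tW}]$; and from the ODE \eqref{22} established inside the proof of Proposition \ref{p1}, one obtains $E[|W|e^{tW}]\le Cx\exp(t^2/2+\gamma t^3/6)$ at $t=x$ in the prescribed range. Multiplying through by $e^{-tx}$ and using $e^{-tx}\cdot e^{t(x+\varepsilon)}=e^{\varepsilon x}$ together with $e^{-tx}\exp(t^2/2+\gamma t^3/6)=\exp(-x^2/2+\gamma x^3/6)$ at $t=x$ delivers the main term. The hardest step is controlling the remainder in the Stein identity: the test function $f'$ is only of bounded variation, so the Taylor remainder must be written in integral form, and each local-dependence error has to be tracked uniformly in $\alpha$ to produce exactly the factor $ms^2d^2\delta^2(\varepsilon+d\delta)$ rather than a cruder bound. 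A secondary but non-trivial subtlety is extracting the sharp prefactor $x$ in front of $\exp(-x^2/2+\gamma x^3/6)$, which requires re-using the recursion \eqref{22} from the proof of Proposition \ref{p1} rather than only its conclusion.
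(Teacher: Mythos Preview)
Your three ingredients are the right ones, and the overall architecture matches the paper's proof. However, the central identity you write down,
\be{
\tfrac12 E[f'(W)V_2]=E[Wf(W)]+\mathrm{Rem},
}
is not what the exchangeable-pair antisymmetry actually produces. Starting from $\sum_\alpha E[(W-W^{\{\alpha\}})(f(W)+f(W^{\{\alpha\}}))]=0$ and expanding $f(W^{\{\alpha\}})-f(W)$ to first order gives
\be{
\tfrac12 E[f'(W)V_2]\approx E[V_1 f(W)],
}
where $V_1=\sum_\alpha(W-W^{\{\alpha\}})$. In general $E[V_1\mid X_1,\dots,X_m]\neq W$; for instance if $\xi_i=X_1X_2$ with $EX_j=0$, one gets $2\xi_i$ rather than $\xi_i$. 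So you cannot simply replace $E[V_1 f(W)]$ by $E[Wf(W)]$, and the local-dependence manipulations \eqref{1}--\eqref{3} (which expand $EWe^{tW}$ in terms of the $\xi_i$'s) do not apply to $V_1$, since $V_1$ depends on the auxiliary variables $X_\alpha'$.

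The paper closes this gap by bounding $|E[V_1 f(W)]|$ directly, and this requires a \emph{second} bounded-differences concentration argument, this time for $V_1$ (via Lemma~\ref{l10}, which gives a $2sd\delta$ difference bound for $V_1$). Concretely, one uses $|f|\le C(\varepsilon+d\delta)e^{x(x+\varepsilon+2d\delta)}I(W\ge x-2d\delta)$ and then splits $E|V_1|I(W\ge x-2d\delta)$ into the piece where $|V_1|>Mx$ (controlled by the sub-Gaussian tail of $V_1$) and the piece where $|V_1|\le Mx$ (reduced to $M\cdot EWI(W\ge x-2d\delta)$, which is then handled by~\eqref{22} as you suggest). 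The choice $M\asymp ms^2d^2\delta^2$ is exactly what produces the prefactor $ms^2d^2\delta^2$ in the conclusion; your proposal attributes this factor to a Taylor remainder, but in fact it enters through this truncation on $V_1$. Without the $V_1$ concentration step, the argument is incomplete.
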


To prove Proposition \ref{p2}, we apply the following lemma, which provides moment generating function bounds for a function of independent random variables. It is proved in a manner similar to that in \cite{Ch07}. See \cite{Ch08} and \cite{ChRo10} for related ideas.

\begin{lemma}\label{l7}
Let $V=h(Y_1,\dots, Y_N)$ where $(Y_1,\dots, Y_N)$ are independent.
Assume that $EV=0$.
Let $(\tilde{Y}_1,\dots, \tilde{Y}_N)$ be an independent copy of $(Y_1,\dots, Y_N)$.
Suppose that for any $i\in [N]$,
\be{
|h(Y_1,\dots, Y_N)-h(Y_1,\dots, Y_{i-1}, \tilde{Y}_i, Y_{i+1},\dots, Y_N)|\leq \de_3.
}
Then we have, for any $\theta>0$,
\be{
E e^{\theta V}\leq \exp(N \de_3^2 \theta^2/4).
}
\end{lemma}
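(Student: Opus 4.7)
The plan is to follow the Doob-martingale paradigm used for bounded-differences concentration in \cite{Ch07}. First I would introduce the filtration $\mathcal{F}_i=\sigma(Y_1,\dots,Y_i)$ and form the Doob martingale $M_i=E[V\mid\mathcal{F}_i]$, so that $M_0=EV=0$, $M_N=V$, and the increments $D_i:=M_i-M_{i-1}$ satisfy $V=\sum_{i=1}^N D_i$ and form a martingale difference sequence with respect to $(\mathcal{F}_i)_{i=0}^N$.

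The heart of the argument is to show that, conditionally on $\mathcal{F}_{i-1}$, the increment $D_i$ lies in an interval of deterministic length at most $\de_3$. Exploiting the independence of the $Y_j$'s one can write $M_i=\phi_{i-1}(Y_i)$, where
\[
\phi_{i-1}(y):=E\bigl[h(Y_1,\dots,Y_{i-1},y,Y_{i+1},\dots,Y_N)\bmid Y_1,\dots,Y_{i-1}\bigr]
\]
is, once the past is fixed, a function of a single variable. The bounded-differences hypothesis transfers through the inner averaging over $Y_{i+1},\dots,Y_N$ to yield $|\phi_{i-1}(y)-\phi_{i-1}(y')|\leq\de_3$ for all $y,y'$; since $M_{i-1}$ is the $Y_i$-average of $\phi_{i-1}(Y_i)$ it lies in the same interval, and hence $D_i=\phi_{i-1}(Y_i)-M_{i-1}$ has conditional range at most $\de_3$.

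With the range bound in hand, I would apply Hoeffding's lemma conditionally on $\mathcal{F}_{i-1}$: a zero-mean random variable whose range does not exceed $\de_3$ has moment generating function bounded by $\exp(\theta^2\de_3^2/8)$, which is amply sufficient for the slightly looser constant $\de_3^2/4$ claimed. Peeling off the outermost increment via the tower property and iterating then yields
\[
E e^{\theta V}=E\bigl[e^{\theta M_{N-1}}\,E[e^{\theta D_N}\mid\mathcal{F}_{N-1}]\bigr]\leq e^{\theta^2\de_3^2/4}\,E e^{\theta M_{N-1}}\leq\cdots\leq\exp\bigl(N\theta^2\de_3^2/4\bigr),
\]
which is the lemma's claim.

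The only genuinely delicate point is the conditional range estimate in the second paragraph: one has to pass the bounded-differences hypothesis through the inner expectation (which is where independence of the $Y_j$'s is essential) and then observe that centering by $M_{i-1}$ preserves, rather than doubles, the range. Everything else is routine.
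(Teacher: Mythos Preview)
Your proof is correct, but it takes a different route from the paper's. You use the classical McDiarmid/Azuma--Hoeffding argument: form the Doob martingale, bound the conditional range of each increment by $\de_3$, apply Hoeffding's lemma conditionally, and iterate via the tower property. This is entirely sound and in fact yields the sharper constant $\de_3^2/8$ in the exponent, as you observe. The paper instead follows Chatterjee's exchangeable-pairs technique: it writes $V=V-V_N=\sum_i(V_{i-1}-V_i)$ as a telescoping sum where $V_i$ has the first $i$ coordinates resampled, uses the exchangeability of $Y_i$ and $\tilde Y_i$ to symmetrize, applies the inequality $\babs{(e^x-e^y)/(x-y)}\leq(e^x+e^y)/2$, and arrives at the differential inequality $m_a'(\theta)\leq \frac{\theta}{2}N\de_3^2\, m_a(\theta)$ for the truncated MGF $m_a(\theta)=Ee^{\theta(V\wedge a)}$, which integrates to the stated bound. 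Your approach is more elementary and better known; the paper's approach is the one actually developed in \cite{Ch07} (so your citation is slightly misleading---\cite{Ch07} does not use the Doob-martingale paradigm) and sits more naturally within the Stein's-method framework of the surrounding paper, besides generalizing more readily beyond the independent-coordinates setting.
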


\begin{proof}[Proof of Lemma \ref{l7}]
Let $V_0=V$ and for $i\in [N]$, let
\be{
V_i=h(\tilde{Y_1},\dots, \tilde{Y}_{i}, Y_{i+1},\dots, Y_N)
}
and
\be{
U_i'=h(Y_1,\dots, Y_{i-1}, \tilde{Y}_{i}, Y_{i+1},\dots, Y_N).
}
For $a>0$ and $\theta\geq 0$, let $m_a(\theta)=E e^{\theta (V\wedge a)}$.
Because $V_N$ is independent of $V$ and $E V_N=0$, we have
\be{
m_a'(\theta)=E (V\wedge a)e^{\theta (V\wedge a)}\leq E V e^{\theta (V\wedge a)}=E(V-V_N) e^{\theta (V\wedge a)}=\sum_{i=1}^N E(V_{i-1}-V_i) e^{\theta (V\wedge a)}.
}
Note that $E(V_{i-1}-V_i)e^{\theta (V\wedge a)}=E(V_i-V_{i-1}) e^{\theta (U_i'\wedge a)}$, which is a consequence of the exchangeability of $Y_i$ and $\tilde{Y}_i$.
Therefore,
\be{
m_a'(\theta)\leq \frac{1}{2}\sum_{i=1}^N  E(V_{i-1}-V_i) (e^{\theta (V\wedge a)}-e^{\theta (U_i'\wedge a)}).
}
From the fact that (cf. (7) of \cite{Ch07}) for any $x, y\in \mathbb{R}$,
\be{
\big|\frac{e^{x}-e^y}{x-y}\big|\leq \frac{1}{2}(e^x+e^y),
}
we have
\bes{
m_a'(\theta)\leq & \frac{\theta}{4} \sum_{i=1}^N E|V_{i-1}-V_i||V-U_i'| (e^{\theta (V\wedge a)}+e^{\theta (U_i'\wedge a)})\\
=& \frac{\theta}{2} \sum_{i=1}^N E|V_{i-1}-V_i||V-U_i'| e^{\theta (V\wedge a)},
}
again by the exchangeability of $Y_i$ and $\tilde{Y}_i$.
From the boundedness conditions on $|V_{i-1}-V_i|$ and $|V-U_i'|$, we have
\be{
m_a'(\theta)\leq \frac{\theta}{2} N \de_3^2 m_a(\theta), \ \forall\ \theta\geq 0,
}
which implies
\be{
m_a(\theta)\leq \exp(N\de_3^2 \theta^2/4).
}
The lemma is proved by letting $a\to \infty$.

\end{proof}

\begin{proof}[Proof of Proposition \ref{p2}]
In this proof, we use $c$ and $C$ to denote positive constants that can depend only on $C_0$.
Recall $W^{\{\alpha\}}$ from Section 3.1.
Let $I$ be a uniform random variable on $[m]$ and independent of all else.
Let $W'=W^{\{I\}}$.
Similar to \cite{Sh10}, define
\be{
f(w)=
\begin{cases}
0, & w\leq x-2d\de\\
e^{xw}(w-x+2d\de), & x-2d\de< w\leq x+\varepsilon +2d\de\\
e^{x(x+\varepsilon+2d\de)} (\varepsilon+4d\de), & w>x+\varepsilon+2d\de.
\end{cases}
}
From \eq{6}, we have
\be{
\mathcal{L}(W, W')=\mathcal{L}(W',W);
}
Hence
\be{
E(W-W')(f(W)+f(W'))=0.
}
Rewrite it as
\be{
\text{LHS}:= 2E(W-W')f(W)=E(W-W')(f(W)-f(W')) =: \text{RHS}.
}

\noindent\emph{Part I: Upper bound for LHS.}

Averaging over $I$:
\be{
\text{LHS}=\frac{2}{m}\sum_{\alpha=1}^m E(W-W^{\{\alpha\}})f(W).
}
Recall
$V_1=\sum_{\alpha=1}^m (W-W^{\{\alpha\}})$ and note that $x-2d\de\geq 0$ by the assumptions of the proposition.
From the upper bound on $f$, we have
\besn{\label{10}
|\text{LHS}|&\leq \frac{2}{m} E|V_1| (\varepsilon+4d\de)e^{x(x+\varepsilon+2d\de)} I(W\geq x-2d\de)\\
&\leq \frac{2}{m} (\varepsilon+4d\de)e^{x(x+\varepsilon+2d\de)} \big[ E|V_1|I(|V_1|>M(x-2d\de)) +ME W I(W\geq x-2d\de)     \big],
}
where $M\geq 1$ is to be chosen above \eq{8}.
Note that $V_1$ is symmetrical.
For the first term on the right-hand side of \eq{10}, we have
\besn{\label{23}
&E|V_1|I(|V_1|>M(x-2d\de)) =2 E V_1I(V_1>M(x-2d\de))\\
\leq & 2 M (x-2d\de) P(V_1>M(x-2d\de))+2\int_{M(x-2d\de)}^\infty P(V_1>y) dy.
}
Applying Lemma \ref{l7} with $\theta=x$ and Lemma \ref{l10} to $V_1$, we have
\be{
E e^{xV_1}\leq \exp(Cm s^2 d^2 \de^2 x^2).
}
Therefore,
\bes{
&E|V_1|I(|V_1|>M(x-2d\de))\\
\leq & 2M(x-2d\de)\frac{e^{Cm s^2 d^2 \de^2 x^2}}{e^{xM(x-2d\de)}}+2\int_{M(x-2d\de)}^\infty \frac{e^{Cm s^2 d^2 \de^2 x^2}}{e^{xy}} dy \\
\leq & C e^{2Md\de x}e^{-M x^2} M x e^{Cm s^2 d^2 \de^2 x^2}.
}
Now we consider the second term on the right-hand side of \eq{10}.
Note that $|\g x|\leq C n s^2 d^2 \de^3 x\leq C$ (cf. \eq{47}) for the range of $x$ in the proposition to be non-empty.
Following reasoning similar to that for \eq{52} and \eq{53}, we have $ns^3 d^3 \de^4 x^3\leq ns^3 d^3 \de^4 x^4\leq C$ and 
\ben{\label{60}
sd\de x\leq C.
}
From the proof of Proposition \ref{p1} (cf. \eq{22}), we have
\be{
E W e^{xW}\leq Cx \exp(\frac{x^2}{2}+\frac{\gamma x^3}{6}).
}
Therefore, from \eq{60},
\be{
E W I(W\geq x-2d\de)\leq E We^{xW}/e^{x(x-2d\de)}\leq Cx \exp(-\frac{x^2}{2}+\frac{\gamma x^3}{6})
}
Combining the above bounds, we have
\ben{\label{51}
|\text{LHS}|\leq \frac{C}{m} e^{x^2 }(\varepsilon+d\de) e^{\varepsilon x} M x [e^{2Md\de x} e^{-M x^2 +Cms^2 d^2 \de^2 x^2}+e^{-x^2/2+\g x^3/6}].
}
Now let $M=C(ms^2 d^2 \de^2+1)$ for a sufficiently large $C$.
Note that from \eq{5} and \eq{47}, we have $1\leq 2m d^2\de^2$.
Recall $2d\de\leq 1$ from the assumption of the proposition.
The first term inside the brackets in \eq{51} is dominated by the second term, and
we have
\ben{\label{8}
|\text{LHS}|\leq \frac{C}{m} e^{x^2} (\varepsilon+d\de) e^{\varepsilon x} m s^2 d^2 \de^2 x \exp(-\frac{x^2}{2}+\frac{\gamma x^3}{6}).
}

\noindent\emph{Part II: Lower bound for RHS.}

Because $f$ is increasing and for $x-2d\de\leq w\leq x+\varepsilon+2d\de$,
\be{
f'(w)=xe^{xw}(w-x+2d\de)+e^{x w}\geq e^{x(x-2d\de)},
}
we have, from \eq{49} and \eq{60},
\bes{
\text{RHS}=& E(W-W')(f(W)-f(W'))\\
\geq & E (W-W')(f(W)-f(W')) I(x\leq W\leq x+\varepsilon) I(|W-W'|\leq 2d\de)\\
\geq & cE(W-W')^2 e^{x^2} I(x\leq W\leq x+\varepsilon).
}
Averaging over $I$, we have, recalling $V_2=\sum_{\alpha=1}^m(W-W^{\{\alpha\}})^2$,
\be{
\text{RHS}\geq \frac{c}{m}e^{x^2} E I(x\leq W\leq x+\varepsilon) V_2.
}
Recall from \eq{5} that $E V_2=C_2$.
We have
\bes{
\text{RHS}\geq& \frac{c C_2}{m}e^{x^2} E I(x\leq W\leq x+\varepsilon) I(V_2\geq C_2/2)\\
\geq &\frac{c C_2}{m}e^{x^2}  (P(x\leq W\leq x+\varepsilon)-P(V_2< C_2/2)).
}
We now find an upper bound for the second probability, which equals
\be{
P(EV_2-V_2>C_2/2).
}
Applying Lemma \ref{l7} and Lemma \ref{l10} to $EV_2-V_2$, we have
\be{
P(EV_2-V_2>C_2/2) \leq  e^{-\theta C_2/2} \exp(Cms^2d^4\de^4 \theta^2)= \exp(- \frac{1}{C ms^2d^4\de^4})
}
by choosing the optimal $\theta=C_2/4Cms^2 d^4 \de^4$ and using $C_2\geq 2$ from \eq{5}.
We have arrived at:
\ben{\label{9}
\text{RHS}\geq \frac{c}{m} e^{x^2} \big[ P(x\leq W\leq x+\varepsilon) -\exp(-\frac{ 1}{Cms^2d^4\de^4})    \big]
}
The proof is finished by combining \eq{8} and \eq{9}.

\end{proof}

\section{Proof of the main result}

In this section, we prove our main result, Theorem \ref{t1}.
The lemmas stated in the proof are proved below.
In this section, we use $C$ to denote positive constants and use $K$ to denote positive integers.
They can depend only on $C_0$ and
may differ in different expressions.
We use $O(a)$ to denote a quantity such that $|O(a)|\leq C a$.

\subsection{Proof of Theorem \ref{t1}}
First, we have the following absolute-error bound in normal approximation for $W$:
\begin{lemma}\label{l11}
\ben{\label{31}
\sup_{x\in \mathbb{R}} |P(W\leq x)-\Phi(x)|\leq C n s^2 d^2 \de^3
}
\end{lemma}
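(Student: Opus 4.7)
The plan is to apply the classical Berry--Esseen bound for locally dependent sums via Stein's method, as in \cite{ChSh04}, and then substitute the boundedness conditions \eq{11}. For fixed $x \in \IR$, let $f = f_x$ denote the bounded solution of the Stein equation $f'(w) - w f(w) = I(w \le x) - \Phi(x)$, which satisfies $\|f\|_\infty \le C$ and $\|f'\|_\infty \le 1$. Then
$$P(W \le x) - \Phi(x) = E\bklr{f'(W) - W f(W)}.$$

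The central calculation is to exploit the local dependence structure (LD1)--(LD2) to rewrite $E W f(W)$. Writing $W f(W) = \sum_{i=1}^n \xi_i f(W)$ and using the independence of $\xi_i$ and $W - \xi_{A_i}$ from (LD1) together with $E \xi_i = 0$, one obtains
$$E \xi_i f(W) = E \xi_i \bklr{f(W) - f(W - \xi_{A_i})} = E \xi_i \xi_{A_i} \int_0^1 f'\bklr{W - (1-u)\xi_{A_i}}\, du.$$
Since $\sum_i E \xi_i \xi_{A_i} = \Var(W) = 1$, the leading term cancels $E f'(W)$, and the residual reduces to controlling differences of $f'$ between points that differ by at most $|\xi_{A_i}| \le 3 s d \delta$. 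Because $f'$ has a jump at $x$ coming from the indicator, these differences are handled by a concentration inequality of the form $P(x - a \le W \le x + a) \le C (a + n s^2 d^2 \delta^3)$, itself provable by a short recursive Stein argument under (LD2).

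Collecting contributions gives an overall bound of order $\sum_{i=1}^n \bklr{E|\xi_i|\xi_{A_i}^2 + E|\xi_i\xi_{A_i}^2|}$. Substituting $|\xi_i| \le \delta$ and $|\xi_{A_i}| \le |A_i|\delta \le 3sd\delta$ from \eq{48} bounds each summand by $C s^2 d^2 \delta^3$, and summing over $i \in [n]$ yields the claimed $C n s^2 d^2 \delta^3$. The main obstacle is the concentration step; however, since only a polynomial rate is required, the simpler $L^2$-type concentration bound for locally dependent sums suffices, and there is no need to invoke the exponential concentration inequality of Proposition \ref{p2}. In fact, the statement follows directly from the general local-dependence Berry--Esseen theorem of \cite{ChSh04}, so the proof reduces to verifying that our quantities satisfy the hypotheses and plugging in the boundedness constants.
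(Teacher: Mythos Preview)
Your proposal is correct and follows essentially the same Stein-method-with-local-dependence route as the paper; the paper opts for a smoothed indicator $g_\beta$ and the Lipschitz-type bound on $F'$ from \cite{ChGoSh11}, then closes a recursive inequality for $d_K$ by choosing $\beta\asymp ns^2d^2\de^3$, whereas you work with the sharp indicator and invoke a concentration inequality directly---these are two standard, interchangeable implementations of the same idea, and your appeal to \cite{ChSh04} for the black-box Berry--Esseen bound is also valid.
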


\medskip

From \eq{5} and \eq{47}, we have
\ben{\label{55}
|\g| x^2\leq Cns^2d^2 \de^3 x^2\leq C mn s^4 d^4 \de^5 x^2\leq C
}
for $x$ in \eq{25}.
If $x$ is bounded, from \eq{55}, we have
\ben{\label{56}
|(1-\Phi(x))-(1-\Phi(x))e^{\g x^3/6}|\leq C|\g|\leq C n s^2 d^2\de^3.
}
From \eq{31}, \eq{56} and \eq{55}, \eq{13} holds for bounded $x$.
Therefore, without loss of generality, we can assume in the following proof that $x$ is sufficiently large and $mns^4 d^4 \de^5$, and hence $|\g|$, is sufficiently small.
These conditions may be used implicitly below.

We only prove for the case $\g\ne 0$.
The case $\g=0$ follows from a similar and simpler proof by working directly with the standard normal distribution.
For $\g\ne 0$, $(1-\Phi(x))e^{\g x^3/6}$ is a no longer a distribution function.
We use a standardized Poisson distribution for an intermediate approximation.
Let $Z_\gamma=\gamma(Y_\gamma-\frac{1}{\gamma^2})$, where $Y_\gamma\sim Poi(\frac{1}{\gamma^2})$.
We have $E Z_\g=0, E Z_\g^2=1, E Z_\g^3=\gamma$.
From Cram\'er's expansion, see, for example, \cite[Chapter VIII, Theorem 2]{Pe75}, we have
\ben{\label{27}
\frac{P(Z_\g>x)}{(1-\Phi(x))e^{\g x^3/6}}=1+O(|\g|)(1+x)+O(\g^2) x^4\ \text{for}\ 0\leq x\leq C_0|\g|^{-1/2}
}
for $|\g|\leq 1$. Therefore, it suffices to prove
\besn{\label{57}
|P(W>x)-P(Z_\g>x)|\leq& C m n s^4 d^4 \de^5 (1+x^2)(1-\Phi(x)) e^{\g x^3/6}\\
\leq &C m n s^4 d^4 \de^5 x \exp(-\frac{x^2}{2}+\frac{\g x^3}{6}) .
}
Denote the support of $Z_\gamma$ by
\be{
\mathcal{S}=\{\gamma \mathbb{Z}^+-\frac{1}{\gamma}\}.
}
We denote 
\be{
\alpha=ns^2d^2\de^3
}
and use them interchangeably below.
Let
\be{
h_\alpha^+(w)=
\begin{cases}
1 & w< x,\\
1-2(\frac{w-x}{\alpha})^2 & x\leq w<x+\alpha/2,\\
2(1-\frac{w-x}{\alpha})^2 & x+\alpha/2\leq w<x+\alpha,\\
0 & w\geq x+\alpha.
\end{cases}
}
and
\be{
h_\alpha^-(w)=
\begin{cases}
1 & w< x-\alpha,\\
1-2(\frac{w-(x-\alpha)}{\alpha})^2 & x-\alpha\leq w<x-\alpha/2,\\
2(1-\frac{w-(x-\alpha)}{\alpha})^2 & x-\alpha/2\leq w<x,\\
0 & w\geq x.
\end{cases}
}
Let $h_\alpha=h_\alpha^+$ or $h_\alpha=h_\alpha^-$.
The following holds for either choice of $h_\alpha$.
It is straightforward to verify that $h_\alpha'$ exists and is continuous and
\be{
|h_\alpha'(w_1)|\leq \frac{2}{\alpha},\quad \Big|\frac{h_\alpha'(w_1)-h_\alpha'(w_2)}{w_1-w_2}\Big|\leq \frac{8}{\alpha^2},\ \forall\ w_1\ne w_2.
}
Note that
\besn{\label{38}
&E h_\alpha^-(W)-E h_\alpha^-(Z_\g)-P(x-\alpha<Z_\g\leq x)\\
\leq & P(W\leq x)-P(Z_\g\leq x)\\
\leq &E h_\alpha^+(W)-E h_\alpha^+(Z_\g)+P(x<Z_\g\leq x+\alpha).
}
For $w_0\in \mathcal{S}$, $|w_0|=O(|\g|^{-1/2})$ and sufficiently small $|\g|$, applying Stirling's approximation
and Taylor's expansion to the Poisson probability, we have
\ben{\label{28}
P(Z_\g=w_0)=P(Y_\g=\frac{w_0}{\g}+\frac{1}{\g^2})=\frac{|\g|}{\sqrt{2\pi}} \exp(-\frac{w_0^2}{2}+\frac{\g w_0^3}{6}+O(1)).
}
Therefore, the difference between $P(W\leq x)-P(Z_\g\leq x)$ and $E h_\alpha(W)-E h_\alpha(Z_\g)$ in \eq{38} is bounded by
\besn{\label{33}
P(x-\alpha<Z_\g\leq x+\alpha)=O(\alpha ) \exp(-\frac{x^2}{2}+\frac{\g x^3}{6}),
}
which is bounded by the right-hand side of \eq{57}.
To bound $Eh_\alpha (W)-E h_\alpha(Z_\g)$,
consider the Stein equation for $Z_\gamma$:
\ben{\label{19}
\frac{1}{\gamma} (f(w+\gamma)-f(w))-wf(w)=h_\alpha(w)-E h_\alpha(Z_\g).
}
It has the following solution $f:=f_{h_\alpha}$ on $\mathcal{S}$:
$f(-1/\g)=0$ and
for $w_0\in\mathcal{S}\backslash \{-\frac{1}{\gamma}\}$,
\besn{\label{24}
f(w_0)=&\frac{1}{\frac{1}{\gamma}P(Y_\gamma= \frac{1}{\g^2}+\frac{w_0}{\g}-1)} E[h_\alpha(Z_\g)-E h_\alpha(Z_\g)]I(Y_\gamma\leq \frac{1}{\g^2}+\frac{w_0}{\g}-1)\\
=& - \frac{1}{\frac{1}{\gamma}P(Y_\gamma= \frac{1}{\g^2}+\frac{w_0}{\g})} E[h_\alpha(Z_\g)-E h_\alpha(Z_\g)]I(Y_\gamma\geq \frac{1}{\g^2}+\frac{w_0}{\g}),
}
where we recall that $Y_\g\sim \text{Poi}(1/\g^2)$.
From the expression of $f$ in \eq{24} and $|h_\alpha(Z_\g)-E h_\alpha(Z_\g)|\leq 1$, we have
\be{
|f(w_0)|\leq \frac{1}{|\g|}\min\{\frac{\g^2 P(Y_\gamma\leq \frac{1}{\g^2}+\frac{w_0}{\g}-1)}{P(Y_\gamma= \frac{1}{\g^2}+\frac{w_0}{\g}-1)}, \frac{\g^2 P(Y_\gamma\geq \frac{1}{\g^2}+\frac{w_0}{\g})}{P(Y_\gamma= \frac{1}{\g^2}+\frac{w_0}{\g})} \}.
}
From the proof of Lemma 1.1.1 of \cite{BaHoJa92} (cf. (1.20) and (1.21) therein),
if $w_0\leq \g$, then the first term inside the minimum is bounded by $2(1\wedge |\g|)$,
and if $w_0> \g$, then the second term inside the minimum is bounded by $2(1\wedge |\g|)$.
Therefore,
\ben{\label{32}
|f(w_0)|\leq 2.
}
Because in general our $W$ has different support from $\mathcal{S}$,
we extend $f$ to $f: \mathbb{R}\to \mathbb{R}$ as follows.
Let $f(w_0)=0$ for $w_0\in \{\gamma \mathbb{Z}^{-}-\frac{1}{\gamma}\}$.
For $w$ between $w_0$ and $w_0+\g$ such that $w_0\in \{\gamma \mathbb{Z}-\frac{1}{\gamma}\}$,
we define $f(w)$ to be a fifth-order polynomial function such that it matches the discrete derivatives at $w_0$ and $w_0+\g$ up to the second order.
In more detail, let
\be{
f_0:=f(w_0),\ f_1:=f(w_0+\gamma),\ f_2:=f(w_0+2\gamma),\ f_{-1}:=f(w_0-\gamma),
}
\ben{\label{39}
f_0':=\frac{f_1-f_{-1}}{2\gamma},\ f_1':=\frac{f_2-f_0}{2\gamma},
}
\ben{\label{40}
f_0'':=\frac{f_1-2f_0+f_{-1}}{\gamma^2},\ f_1'':=\frac{f_2-2f_1+f_0}{\gamma^2},
}
and let
\ben{\label{41}
f(w)=\sum_{i=1}^6 b_i (w-w_0)^{6-i},
}
where
\bes{
&b_1=-\frac{1}{\gamma^2}\cdot \frac{f_1''-f_0''}{\gamma},\\
&b_2=\frac{5}{2\gamma}\cdot \frac{f_1''-f_0''}{\gamma},\\
&b_3=-\frac{3}{2}\cdot \frac{f_1''-f_0''}{\gamma},\\
&b_4=\frac{f_0''}{2},\  b_5=f_0',\  b_6=f_0.
}
In the following, for any $w\in \mathbb{R}$, let $w_0$ be such that $w_0\in \{\gamma \mathbb{Z}-\frac{1}{\gamma}\}$ and
$w_0+\g < w\leq w_0$ if $\g<0$ and
$w_0\leq w< w_0+\gamma$ if $\g>0$.
For a random variable $W$, $W_0$ is defined in the same way as for $w_0$.

It follows from the construction of $f$ above that $f''(w)$ exists and is continuous and $f^{(3)}(w)$ exists for $w\notin \mathcal{S}$.
For $w\in \mathcal{S}$, we define $f^{(3)}(w)=0$ as they will not enter into consideration when we do Taylor's expansion below (cf. \eq{44}).
Note that
\ben{\label{42}
f(w)=O(1)(f(w_0-\g)+f(w_0)+f(w_0+\g)+f(w_0+2\g)).
}
Therefore, from \eq{32}, $f$ is bounded.
Note that after such extension, $f$ \emph{no longer} satisfies \eq{19} exactly, except on $\mathcal{S}$.
However, we can quantify the error as in the following lemma.
\begin{lemma}\label{l6}
For the above defined $f$, we have,
\besn{\label{20}
		&\frac{1}{\gamma}(f(w+\gamma)-f(w))-w f(w)\\
		=&h_\alpha(w)-E h_\alpha(Z_\g)+O(1)I(|w-x|\leq C\alpha)\\
		&+O(|\g|)\sum_{i=-K}^{K}|f(w_0+i\cdot\gamma)| +O(1) I(w<-1/\gamma+\gamma)I(\g>0)\\
		&+O(1) I(w>-1/\gamma+\gamma)I(\g<0).
}
\end{lemma}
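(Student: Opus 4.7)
The plan is to verify that the Stein equation (\ref{19}) holds verbatim on the support $\mathcal{S}$ and then quantify the defect introduced by the polynomial extension of $f$ to $\mathbb{R}$. For any $w \in \mathbb{R}$, either $w \in \mathcal{S}$ (in which case the equation holds by construction of $f$ in (\ref{24})), or $w$ lies strictly between two consecutive grid points $w_0$ and $w_0 + \g$ in $\g\mathbb{Z} - 1/\g$. I treat the interior case (where $w_0$ and several neighbors belong to $\mathcal{S}$) first and the boundary case separately afterward.

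For the interior case, write $w = w_0 + \tau\g$ with $\tau \in (0,1)$, so that $f(w) = p_0(w)$ on $[w_0, w_0+\g]$ and $f(w+\g) = p_1(w+\g)$ on $[w_0+\g, w_0+2\g]$, where $p_0, p_1$ are the fifth-order polynomial interpolants built from (\ref{41}). A direct expansion, exploiting that the coefficients $b_1, \dots, b_6$ are designed so that $p_0(w_0) = f_0$, $p_0(w_0+\g) = f_1$, and the first and second discrete derivatives match at both endpoints, gives
\[
p_0(w_0 + \tau\g) = f_0 + f_0' \tau\g + \tfrac{f_0''}{2}(\tau\g)^2 + \g^2 P(\tau)(f_1'' - f_0''),
\]
with an analogous formula for $p_1(w+\g)$, where $P(\tau) = -\tfrac{3}{2}\tau^3 + \tfrac{5}{2}\tau^4 - \tau^5$ vanishes at $\tau = 0, 1$. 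Substituting these expressions into $\frac{1}{\g}(f(w+\g) - f(w)) - w f(w)$, the constant piece is $\frac{f_1 - f_0}{\g} - w_0 f_0$, which equals $h_\alpha(w_0) - E h_\alpha(Z_\g)$ by the exact Stein equation at $w_0 \in \mathcal{S}$.

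The remaining residual is a sum of terms involving $(f_1' - f_0')$, $(f_1'' - f_0'')$, $(f_2'' - 2f_1'' + f_0'')$, paired with factors of $\tau\g$, $\tau^2 \g$, or $\g P(\tau)$, together with $-w_0 f_0' \tau\g$ and $-\tau\g f_0$ that are generated from $w \cdot p_0(w)$. The key algebraic step is to apply the Stein equations at the neighboring grid points $w_0 - \g$, $w_0$, $w_0 + \g$, and $w_0 + 2\g$ repeatedly to rewrite each difference $f_{k+1} - f_k$ as $\g[(w_0 + k\g) f_k + h_\alpha(w_0 + k\g) - E h_\alpha(Z_\g)]$. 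After these substitutions and cancellations, every term outside of $h_\alpha(w_0) - E h_\alpha(Z_\g)$ is of the form $\g \cdot (\text{bounded coefficient}) \cdot f(w_0 + i\g)$ for $|i| \leq K$, plus $h_\alpha$ discrepancies. Finally, $|h_\alpha(w) - h_\alpha(w_0)| \leq 1$ and is zero unless $|w - x| \leq C\alpha$ (by the definition of $h_\alpha^{\pm}$), which supplies the $O(1) I(|w-x| \leq C\alpha)$ term; this same indicator absorbs any $h_\alpha(w_0 \pm \g) - h_\alpha(w_0)$ contributions from the Stein substitution step.

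The boundary case arises when $w_0$, $w_0 \pm \g$, or $w_0 + 2\g$ fails to lie in $\mathcal{S}$, which for $\g > 0$ corresponds to $w < -1/\g + \g$ (symmetrically for $\g < 0$). Here the Stein equation at one or more of the relevant grid points is unavailable, so the substitution argument above breaks down. However, using the uniform bound $|f| \leq C$ that follows from (\ref{32}) and (\ref{42}), the defect is at most an absolute constant, which matches the $O(1) I(w < -1/\g + \g) I(\g > 0)$ term in the lemma. The main obstacle is the bookkeeping in the interior case: one must verify that the specific degree-5 polynomial interpolant chosen in (\ref{41}) produces exactly the cancellations required so that no residual of order $O(1)$ (as opposed to $O(|\g|) |f|$) survives away from the transition region of $h_\alpha$. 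Once this algebraic identity is carried out, the remaining bounds are straightforward.
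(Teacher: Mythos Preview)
Your proposal is correct and follows essentially the same approach as the paper: expand the quintic interpolant, identify the constant term as the exact Stein equation at $w_0$, and then use the Stein identities at neighboring grid points to reduce all remaining terms to $O(|\g|)\sum|f(w_0+i\g)|$ plus $h_\alpha$-differences supported near $x$, with the boundary region handled separately via the uniform bound $|f|\leq C$. The paper organizes the interior computation slightly differently, grouping the expansion into four pieces $H_1,\dots,H_4$ according to the order of $(w-w_0)$ and keeping the full factor $w$ (rather than splitting $w=w_0+\tau\g$) so that each $H_j$ has the shape $\frac{1}{\g}[g(w_0+\g)-g(w_0)]-wg(w_0)$ for $g=f,f',f''$ or a second difference of $f''$; this makes the cancellation of the potentially large $w$-factor via the Stein equation at shifted grid points immediate, and spares some of the bookkeeping you flag as the main obstacle.
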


By replacing $w$ by $W$ and $w_0$ by $W_0$ in \eq{20} and taking expectations on both sides, we have
\bes{
		&E h_\alpha(W)-E h_\alpha(Z_\g)\\
		=&E[\frac{1}{\gamma}(f(W+\gamma)-f(W))-W f(W)]+O(1)P(|W-x|\leq C\alpha)\\
		& \quad+O(|\gamma|)\sum_{i=-K}^{K}E |f(W_0+i\cdot\gamma)| +O(1) P(W<-1/\gamma+\gamma)I(\g>0)\\
		&\quad +O(1) P(W>-1/\gamma+\gamma)I(\g<0)\\
		=:&R_1+R_2+R_3+R_4+R_5.
}
We bound these remainders in the reverse order.
If $\g>0$, we have, by applying Proposition \ref{p1} to $-W$,
\besn{\label{43}
&P(W<-1/\gamma+\gamma)\leq  C E e^{-xW}/e^{x/\g}\\
\leq& C \exp(\frac{x^2}{2}-\frac{\g x^3}{6}-\frac{x}{|\g|})\\
=& C |\g| \exp(-\frac{x^2}{2}+\frac{\g x^3}{6}) \frac{1}{|\g|}\exp(x^2-\frac{\g x^3}{3}-\frac{x}{|\g|})\\
\leq & C|\g| \exp(-\frac{x^2}{2}+\frac{\g x^3}{6}),
}
where we use $1\leq x=O(1)|\g|^{-1/2}$ and $|\g|$ is sufficiently small (cf. the arguments below \eq{56}).
Together with the same bound for $R_5$, we have
\be{
|R_4|+|R_5|\leq C |\g| \exp(-\frac{x^2}{2}+\frac{\g x^3}{6}).
}
To bound $R_3$, we use the following lemma.
\begin{lemma}\label{l3}
We have,
\besn{\label{34}
\sum_{i=-K}^K E|f(W_0+i\g)|\leq &C P(W\geq x-C\alpha)+C P(Z_\g>x-\alpha)\\
&+C E I(0\leq W\leq x)e^{\frac{W^2}{2}-\frac{\g W^3}{6}}P(Z_\g>x-\alpha).
}
\end{lemma}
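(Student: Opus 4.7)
The plan is to reduce the bound to pointwise estimates on $|f(w_0)|$ for $w_0\in\mathcal{S}$ and then split the range of $w_0$ relative to $x$. By the polynomial interpolation \eqref{41}--\eqref{42}, $|f(w)|$ at any real $w$ is dominated by a bounded sum of $|f|$-values at points of $\mathcal{S}$ within $C|\g|$ of $w$. Consequently it suffices to bound each $E|f(W_0+j\g)|$ for fixed $|j|\le K'$ with $W_0+j\g\in\mathcal{S}$, and then sum to produce the three terms on the right of \eqref{34}.

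First I would dispose of the event $\{W\ge x-C\alpha\}$. The uniform bound $|f|\le 2$ from \eqref{32}, extended off $\mathcal{S}$ via \eqref{42}, gives $E|f(W_0+j\g)|I(W\ge x-C\alpha)\le CP(W\ge x-C\alpha)$, which supplies the first term of \eqref{34}.

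On the complement $\{W<x-C\alpha\}$, I would choose $C$ large enough that $w_0-\g<x-\alpha$ for every relevant $w_0=W_0+j\g$; this is possible because $|\g|\le 4\alpha$ by \eqref{47}. On the event $\{Y_\g\le k-1\}$ appearing in the first representation of \eqref{24} (with $k=\tfrac{1}{\g^2}+\tfrac{w_0}{\g}$), we then have $Z_\g\le w_0-\g<x-\alpha$, so $h_\alpha(Z_\g)\equiv 1$ there and that representation collapses to
\begin{equation*}
f(w_0)=\g\cdot\frac{(1-Eh_\alpha)\,P(Y_\g\le k-1)}{P(Y_\g=k-1)}.
\end{equation*}
Since $1-Eh_\alpha\le P(Z_\g>x-\alpha)$, the task reduces to bounding $|\g|P(Y_\g\le k-1)/P(Y_\g=k-1)$. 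For $w_0\le\g$, the Barbour--Holst--Janson bound cited just above \eqref{32} gives this quantity $\le 2$, so $|f(w_0)|\le 2P(Z_\g>x-\alpha)$. For $0\le w_0<x-\alpha$, applying the skewness-corrected Stirling expansion \eqref{28} to $P(Y_\g=k-1)=P(Z_\g=w_0-\g)$ yields $P(Y_\g=k-1)\asymp|\g|e^{-w_0^2/2+\g w_0^3/6}$ (the $-\g$ shift is absorbed into the $O(1)$ error of \eqref{28} when $w_0=O(|\g|^{-1/2})$). Combined with $P(Y_\g\le k-1)\le 1$, this gives
\begin{equation*}
|f(w_0)|\le Ce^{w_0^2/2-\g w_0^3/6}P(Z_\g>x-\alpha),
\end{equation*}
and the intermediate band $w_0\in[\g,0]$ is handled by either bound since the exponential is $O(1)$ there.

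To finish, I would take expectations, use $|W_0+j\g-W|=O(|\g|)$ together with $|\g|x^2=O(1)$ in the range \eqref{25} to replace $e^{(W_0+j\g)^2/2-\g(W_0+j\g)^3/6}$ by a constant multiple of $e^{W^2/2-\g W^3/6}$, and sum over $|j|\le K$. Combining the two regimes produces
\begin{equation*}
E|f(W_0+j\g)|I(W<x-C\alpha)\le CP(Z_\g>x-\alpha)+CP(Z_\g>x-\alpha)EI(0\le W\le x)e^{W^2/2-\g W^3/6},
\end{equation*}
which are exactly the second and third terms of \eqref{34}. The main obstacle is the joint use of the skewness-corrected Stirling estimate \eqref{28} with the BHJ ratio bound: one has to track how the exponential $e^{w_0^2/2-\g w_0^3/6}$ emerges from the Poisson density and transfers to $W$, because this is precisely the factor that later cancels against the skewness-corrected tail $e^{\g x^3/6}$ appearing in \eqref{57}.
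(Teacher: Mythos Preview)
Your proposal is correct and follows essentially the same route as the paper: split according to whether $w_0-\g>x-\alpha$ (uniform bound \eqref{32}), $w_0\le -1/\g$ (where $f=0$), or $-1/\g\le w_0-\g\le x-\alpha$ (explicit formula \eqref{24} with $h_\alpha\equiv1$), and in the last case apply the BHJ ratio bound for $w_0-\g\le 0$ and the Stirling estimate \eqref{28} for $0<w_0-\g\le x-\alpha$. The only superfluous step is your appeal to the interpolation \eqref{41}--\eqref{42}: since $W_0\in\{\g\IZ-1/\g\}$ by construction, each $W_0+i\g$ already lies on the lattice and you can work directly with \eqref{24} and \eqref{32}.
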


For the first term on the right-hand side of \eq{34}, we have, by \eq{4},
\bes{
&P(W\geq x-C\alpha)\leq C e^{-x^2} \exp(\frac{x^2}{2}+\frac{\gamma x^3}{6})\\
=& C\exp(-\frac{x^2}{2}+\frac{\gamma x^3}{6}).
}
For the second term on the right-hand side of \eq{34}, we have, by \eq{27},
\be{
P(Z_\g\geq x-\alpha)\leq \frac{C}{x}\exp(-\frac{x^2}{2}+\frac{\gamma x^3}{6}).
}
For the third term on the right-hand side of \eq{34}, we have
\besn{\label{35}
&E I(0\leq W\leq x)e^{\frac{W^2}{2}-\frac{\g W^3}{6}}\\
\leq & 1+C\int_0^x (y+1) e^{\frac{y^2}{2}-\frac{\g y^3}{6}} P(W>y) dy\\
=&O(x),
}
where we use the following lemma in the last step.

\begin{lemma}\label{l9}
For integer $k\geq 1$, we have
\be{
\int_0^x y^k e^{\frac{y^2}{2}-\frac{\g y^3}{6}} P(W>y) dy=O(1) x^k.
}
\end{lemma}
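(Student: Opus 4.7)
The plan is to combine Proposition \ref{p1} with Markov's inequality to obtain a Chernoff-type tail bound on $W$, and then apply integration by parts. Denote $I_k=\int_0^x y^k e^{y^2/2-\g y^3/6}P(W>y)\,dy$ and $J=E[e^{W^2/2-\g W^3/6}I(0<W\leq x)]$. By optimizing Markov's inequality at $t\approx y-\g y^2/2$, the moment generating function bound $E e^{tW}\leq C e^{t^2/2+\g t^3/6}$ yields $P(W>y)\leq C\exp(-y^2/2+\g y^3/6)$ for $0\leq y\leq C_0(ns^2d^2\de^3)^{-1/2}$. Direct substitution, however, gives only $I_k\leq C\int_0^x y^k\,dy=O(x^{k+1})$, one power of $x$ more than what is claimed.

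To recover the missing factor I plan to integrate by parts via the identity
\[
y^k e^{y^2/2-\g y^3/6}=\tfrac{d}{dy}\bigl(y^{k-1}e^{y^2/2-\g y^3/6}\bigr)-(k-1)y^{k-2}e^{y^2/2-\g y^3/6}+\tfrac{\g}{2}y^{k+1}e^{y^2/2-\g y^3/6}.
\]
This produces a boundary term of size $x^{k-1} e^{x^2/2-\g x^3/6}P(W>x)=O(x^{k-1})$ by the Chernoff bound, an inductive term $-(k-1)I_{k-2}$, a remainder $(\g/2)I_{k+1}$ that is $O(x^k)$ since $|\g|x^2=O(1)$ in the range (using $|\g|\leq 4ns^2d^2\de^3$, $x^2\leq C(mns^4d^4\de^5)^{-1}$, and the Efron--Stein lower bound from \eq{5} and \eq{47}), and a cross term equal to $E[W^{k-1}e^{W^2/2-\g W^3/6}I(0<W\leq x)]$. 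Running an induction in $k$ then reduces the whole problem to the base case $k=1$, which in turn reduces to bounding $J=O(x)$.

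The main obstacle will be the base case itself. A parallel integration by parts on $J$ gives $J\leq 1+I_1+O(|\g|)I_2=1+I_1+O(x)$, which together with $I_1=J+O(x)$ from the $k=1$ identity is circular. To break the circularity I expect to need a Mill's-ratio-type refinement
\[
P(W>y)\leq \frac{C}{1+y}\exp(-y^2/2+\g y^3/6),\qquad 0\leq y\leq C_0(ns^2d^2\de^3)^{-1/2},
\]
which can plausibly be extracted either by summing the exponential concentration inequality (Proposition \ref{p2}) on increments of length $\varepsilon\asymp 1/y$, or by combining the integral identity $\int_y^\infty P(W>u)\,du=E(W-y)^+\leq Cy^{-1}\exp(-y^2/2+\g y^3/6)$ (which follows from the Chernoff bound by Laplace's method) with a monotonicity argument passing from integral tails to pointwise tails. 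Once this refinement is in hand, substitution yields $I_k\leq C\int_0^x y^k/(1+y)\,dy=O(x^k)$ directly, and the induction closes.
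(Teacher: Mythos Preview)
Your diagnosis is right up to the point where you try to close the circularity with a Mill's-ratio bound $P(W>y)\le C(1+y)^{-1}\exp(-y^2/2+\g y^3/6)$, but neither of your two suggested routes to that bound actually works. For the monotonicity route: from $G(y):=\int_y^\infty P(W>u)\,du\le Cy^{-1}e^{-y^2/2+\g y^3/6}$ and the decrease of $P(W>\cdot)$ you only get $P(W>y)\le a^{-1}G(y-a)$; taking $a$ of order $1$ introduces a factor $e^{-(y-a)^2/2}=e^{-y^2/2}e^{ay+O(1)}$, so the spurious $e^{ay}$ swamps the $1/y$ gain, while taking $a\asymp 1/y$ only reproduces the Chernoff bound. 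For the concentration route: Proposition~\ref{p2} only holds on the finite window $1\le z\le C_0(ns^2d^2\de^3)^{-1/2}$, so summing $P(z\le W\le z+\varepsilon)$ cannot recover the full tail $P(W>y)$, and in any case each increment carries a prefactor $ms^2d^2\de^2$ and an additive term $\exp(-c/(ms^2d^4\de^4))$ that must be summed and controlled; you have not done this, and it is far from clear it closes cleanly. In short, the Mill's-ratio refinement is essentially what the main theorem delivers \emph{after} Lemma~\ref{l9} has been used, so invoking it here risks circularity.

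The paper's argument bypasses the pointwise Mill's ratio entirely by a one-line Laplace-transform trick. Split $[0,x]$ into unit intervals $[j-1,j]$; on each, freeze the exponential tilt at $t=j$, so that $y^k e^{y^2/2-\g y^3/6}=y^k e^{y^2/2-\g y^3/6-jy}\cdot e^{jy}$ with the first factor bounded by $Cj^k e^{-j^2/2-\g(j-\epsilon)^3/6}$. The key identity
\[
\int_{j-1}^{j} e^{jy}P(W>y)\,dy\ \le\ \int_{-\infty}^{\infty} e^{jy}P(W>y)\,dy\ =\ \frac{1}{j}\,E e^{jW}
\]
then supplies the missing $1/j$ for free, and Proposition~\ref{p1} gives $Ee^{jW}\le Ce^{j^2/2+\g j^3/6}$. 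After cancellation and using $|\g|j^2\le|\g|x^2=O(1)$, each block contributes $O(j^{k-1})$, and summing over $j\le x$ yields $O(x^k)$. This is exactly the extra factor your integration-by-parts scheme was chasing, obtained without any pointwise refinement of the tail.
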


Combining these bounds, we have
\bes{
|R_3|\leq C|\g| \exp(-\frac{x^2}{2}+\frac{\g x^3}{6}).
}
Next, we use Proposition \ref{p2} to bound $R_2$ as follows.
Recall we assumed without loss of generality that $x$ is sufficiently large, $mns^4d^4\de^5$, and hence $\alpha$, is sufficiently small (cf. \eq{47}). We have, from Proposition \ref{p2} and $d\de\leq \alpha$ (cf. \eq{47}),
\besn{\label{58}
|R_2|\leq &C P(|W-x|\leq C\alpha)\\
\leq & C ms^2d^2\de^2 \alpha  x \exp(-\frac{x^2}{2}+\frac{\g x^3}{6}) +\exp(-\frac{1}{C ms^2d^4\de^4})\\
\leq & C ms^2d^2\de^2 \alpha x\exp(-\frac{x^2}{2}+\frac{\g x^3}{6})\\
&+ C\exp(-\frac{1}{C ms^2d^4\de^4}+C x^2)\exp(-\frac{x^2}{2}+\frac{\g x^3}{6}).
}
Note that for $x=O(mns^4d^4\de^5)^{-1/2}$, we have (cf. \eq{47})
\be{
ms^2d^4\de^4 x^2=O(\frac{ms^2d^4\de^4}{mns^4d^4\de^5})=O(\frac{ns^2d^2\de^3}{n^2s^4d^2\de^4})
=O(ns^2d^2\de^3).
}
Therefore, the second term on the right-hand side of \eq{58} is dominated by the first term and
\ben{\label{59}
|R_2|\leq C m n s^4 d^4 \de^5 x \exp(-\frac{x^2}{2}+\frac{\g x^3}{6}).
}
We are now left to bound $R_1$.
By Taylor's expansion and exploiting the local dependence structure (LD1)--(LD3) in Section 3.1, we have the following lemma.
Note that this is where we use the crucial choice of $Z_\g$ so that it matches the moments of $W$ up to the third order.

\begin{lemma}\label{l2}
We have
\bes{
&E[\frac{1}{\gamma}(f(W+\gamma)-f(W))-W f(W)]\\
=&O(\alpha^2) E |f^{(3)} (W+O(\alpha))|.
}
\end{lemma}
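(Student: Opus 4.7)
The plan is to Taylor-expand the discrete derivative $\gamma^{-1}(f(W+\gamma)-f(W))$ on one side of the Stein operator, to expand $EWf(W)$ on the other side via the local dependence hierarchy (LD1)--(LD3), and to match the resulting $f'(W)$ and $f''(W)$ terms up to a remainder of the required order. Since $f\in C^2(\mathbb{R})$ with $f^{(3)}$ existing off the countable lattice $\mathcal{S}$, the integral form of Taylor's theorem gives
\be{
\frac{1}{\gamma}(f(W+\gamma)-f(W)) = f'(W) + \frac{\gamma}{2} f''(W) + O(\gamma^2)\, f^{(3)}(W+\theta\gamma)
}
for some $\theta\in(0,1)$. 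Using $|\gamma|\leq 4\alpha$ from \eq{47}, this contributes at most $O(\alpha^2)\,E|f^{(3)}(W+O(\alpha))|$ after taking expectations, so it remains only to prove
\be{
EWf(W) = Ef'(W) + \frac{\gamma}{2} Ef''(W) + O(\alpha^2)\, E|f^{(3)}(W+O(\alpha))|.
}

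The expansion of $EWf(W)$ would mirror the three-stage argument in the proof of Proposition~\ref{p1}, with $f^{(k)}(W)$ replacing $t^k e^{tW}$. At the first stage, (LD1) together with $E\xi_i=0$ gives $EWf(W)=\sum_i E\xi_i[f(W)-f(W^{(i)})]$ with $W^{(i)}:=W-\xi_{A_i}$; Taylor expansion at $W$ yields
\be{
EWf(W) = \sum_i E\xi_i\xi_{A_i}f'(W) - \frac{1}{2}\sum_i E\xi_i\xi_{A_i}^2 f''(W) + \mathrm{Rem}_1,
}
where $|\mathrm{Rem}_1|\leq C n\delta(sd\delta)^3 E|f^{(3)}|=Cns^3d^3\delta^4\,E|f^{(3)}|$, and this is $\leq C\alpha^2 E|f^{(3)}|$ by the structural inequality $nsd\delta^2\geq\Var(W)=1$ from \eq{47}.

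At the second stage I apply (LD2) to the $f'(W)$-sum: rewriting it as $\sum_{i,j\in A_i} E\xi_i\xi_j f'(W)$, splitting $f'(W)=f'(W^{(ij)})+[f'(W)-f'(W^{(ij)})]$ with $W^{(ij)}:=W-\xi_{A_{ij}}$, factoring the first piece by (LD2) as $E\xi_i\xi_j\cdot Ef'(W^{(ij)})$, and Taylor-expanding both the difference and $Ef'(W^{(ij)})$. Using $\sum_{i,j\in A_i}E\xi_i\xi_j=EW^2=1$ to collapse the constant part into $Ef'(W)$, this stage produces a leading $Ef'(W)$ plus $f''(W)$ cross-terms and $f^{(3)}$ remainders of size $O(\alpha^2)E|f^{(3)}|$. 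At the third stage I apply (LD3) analogously to the $f''(W)$ cross-terms together with the $-\frac{1}{2}\sum_i E\xi_i\xi_{A_i}^2 f''(W)$ term from the first stage, replacing $f''(W)$ by $f''(W^{(ijk)})$ modulo a Taylor error and using independence of $\{\xi_i,\xi_j,\xi_k\}$ from $W^{(ijk)}$. After collecting, the coefficient of $Ef''(W)$ equals
\be{
\sum_{i,j\in A_i,\,k\in A_{ij}} E\xi_i\xi_j\xi_k - \frac{1}{2}\sum_{i,j\in A_i} E\xi_i\xi_j^2 = \frac{\gamma}{2}
}
by the identity \eq{50}, and all residual contributions are of the form $\sum E|\xi_{(\cdot)}\cdots\xi_{A_{(\cdot)}}|\,E|f^{(3)}(W+O(\alpha))|$, which fall into the class $O(\alpha^2)\,E|f^{(3)}(W+O(\alpha))|$ by the same order-counting as for $\mathrm{Rem}_1$.

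The main obstacle is the bookkeeping in the last two stages: one must verify that every cross-term generated by the nested splittings either recombines via \eq{50} into $\frac{\gamma}{2}Ef''(W)$ or lies in the target $O(\alpha^2)E|f^{(3)}|$ class. A secondary technical point is checking that every intermediate evaluation of $f^{(3)}$ happens at a point of the form $W+O(\alpha)$, which follows from $|\xi_{A_i}|,|\xi_{A_{ij}}|,|\xi_{A_{ijk}}|\leq 3sd\delta\leq C\alpha$ (using \eq{48} and $sd\delta\leq C\alpha$, itself a consequence of $nsd\delta^2\geq 1$) combined with $|\gamma|\leq 4\alpha$, so the window "$W+O(\alpha)$" in the statement is uniform across all remainder pieces.
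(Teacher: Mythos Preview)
Your approach is essentially the same as the paper's: Taylor-expand the discrete Poisson operator, then push $EWf(W)$ through the (LD1)--(LD3) hierarchy to match the $Ef'(W)$ and $\tfrac{\gamma}{2}Ef''(W)$ terms, with every remainder of order $ns^3d^3\delta^4\,E|f^{(3)}|\leq C\alpha^2 E|f^{(3)}|$ via $nsd\delta^2\geq 1$. The paper organizes the expansion around the shifted points $W-\xi_{A_i}$, $W-\xi_{A_{ij}}$, $W-\xi_{A_{ijk}}$ rather than around $W$, and uses the integral remainder (written with independent uniforms $U_1,U_2$) rather than the Lagrange form since $f^{(3)}$ fails to exist on the lattice $\mathcal{S}$---but these are cosmetic differences, not substantive ones.
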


To bound $f^{(3)}$, we use the following lemma.

\begin{lemma}\label{l4}
We have
\besn{\label{36}
&E|f^{(3)}(W+O(\alpha))|\\
\leq &\frac{C}{\alpha^2} P(|W-x|\leq C\alpha)\\
&+C E (1+|W|^3) |f(W+O(\alpha))|\\
&+C E (1+W^2) I(W\geq x-C\alpha)\\
&+C P(Z_\g>x-C\alpha)\\
&+ \frac{C}{\g^2} P(W\leq -1/\g+O(\alpha))I(\g>0)\\
&+ \frac{C}{\g^2} P(W\geq -1/\g-O(\alpha))I(\g<0).
}
\end{lemma}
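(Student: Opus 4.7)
The plan is first to reduce the bound on $E|f^{(3)}(W+O(\alpha))|$ to one on a third-order forward difference of $f$ at four consecutive lattice points of $\mathcal{S}$, and then to unfold that difference via iterated application of the Stein equation \eqref{19}. For $w$ strictly between $w_0$ and $w_0+\gamma$, differentiating the polynomial in \eqref{41} three times gives
\[
f^{(3)}(w) = 60\, b_1 (w-w_0)^2 + 24\, b_2 (w-w_0) + 6\, b_3.
\]
Each of $b_1, b_2, b_3$ is a constant multiple of $(f_1''-f_0'')/\gamma$ with compensating factors $1/\gamma^2$, $1/\gamma$, and $1$, respectively, and $|w-w_0|\leq|\gamma|$, so
\[
|f^{(3)}(w)| \leq C\,\frac{|f_1''-f_0''|}{|\gamma|} = C\,\frac{|f_2 - 3f_1 + 3f_0 - f_{-1}|}{|\gamma|^3}.
\]
Thus it suffices to control this discrete third-order difference uniformly at the random lattice point $W_0$, which, up to terms of size $O(\alpha)$, corresponds to $W$.

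To do so I would write $g(w) := h_\alpha(w) - Eh_\alpha(Z_\gamma)$ so that the Stein equation on $\mathcal{S}$ reads $f(w+\gamma) = (1+\gamma w)\, f(w) + \gamma\, g(w)$. Iterating this twice forward and once backward expresses $f_{-1}, f_1, f_2$ in terms of $f_0$ and $g_{-1}, g_0, g_1$; substituting into $f_2 - 3f_1 + 3f_0 - f_{-1}$ and organizing the algebra produces a decomposition of the schematic form
\[
f_2 - 3f_1 + 3f_0 - f_{-1} = \gamma\, \Delta^2 g(w_{-1}) + \gamma^2\, Q_1(w_0)\, \Delta g(w_{-1}) + \gamma^3\, Q_2(w_0)\, g_{-1} + \gamma^3\, Q_3(w_0)\, f_0,
\]
where $Q_1$ is linear and $Q_2, Q_3$ are polynomials of degree at most three. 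Dividing by $|\gamma|^3$ and taking expectation, each block maps to a term of the bound: (i) $\Delta^2 g = \Delta^2 h_\alpha$ is supported on $\{|w-x|\leq C\alpha\}$ and bounded there by $8/\alpha^2$ from the Lipschitz property of $h_\alpha'$, giving $(C/\alpha^2)\, P(|W-x|\leq C\alpha)$; (ii) the $h_\alpha$-portions of $\Delta g$ and $g_{-1}$ are supported on $\{w\geq x - C\alpha\}$ and carry a weight of degree at most two in $w_0$, yielding $CE(1+W^2)\, I(W\geq x - C\alpha)$; (iii) the constant $Eh_\alpha(Z_\gamma) = O(P(Z_\gamma > x - C\alpha))$ absorbs against the bounded expectation of the weighting polynomial to produce $CP(Z_\gamma > x - C\alpha)$; (iv) the $Q_3(w_0)\, f_0$ term gives $CE(1+|W|^3)\,|f(W+O(\alpha))|$.

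Finally I would handle the boundary of $\mathcal{S}$: when $w_0$ lies within $O(|\gamma|)$ of $-1/\gamma$, the backward step that expresses $f_{-1}$ via the Stein equation reaches into the zero-extension region $\{\gamma\mathbb{Z}^- - 1/\gamma\}$ where the equation fails, so $f_{-1}$ must be replaced by $0$ and an unmatched $O(1)$ error enters $f_2 - 3f_1 + 3f_0 - f_{-1}$. After division by $|\gamma|^3$ this becomes $O(1/\gamma^2)$ on the event $\{W\leq -1/\gamma + O(\alpha)\}$ when $\gamma>0$, and symmetrically for $\gamma<0$, supplying the last two terms of \eqref{36}. The main obstacle will be the polynomial bookkeeping in the iteration: one must verify that the weights on the $h_\alpha$-parts stay of degree at most two in $w_0$ while the weight on $f_0$ stays of degree at most three, which requires Taylor-expanding $1/(1+\gamma w_{-1})$ carefully and checking that the cross-terms from the $Q_1(w_0)\Delta g(w_{-1})$ contribution do not create larger polynomial factors than those claimed.
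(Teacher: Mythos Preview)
Your proposal is correct and follows essentially the same route as the paper: reduce $|f^{(3)}(w)|$ to $C|f_1''-f_0''|/|\gamma|$ via the explicit polynomial \eqref{41}, then unfold the third-order difference through iterated application of the Stein equation \eqref{19} on $\mathcal{S}$. The paper's presentation differs only in that it does not eliminate $f_{-1}$ in favor of $f_0$; instead it applies \eqref{19} directly to each first difference $f(w+\gamma)-f(w)$ and leaves both $f(w_0)$ and $f(w_0-\gamma)$ in the final expression (both are $f(W+O(\alpha))$ after taking expectation). This sidesteps the division by $1+\gamma w_{-1}$ and the Taylor expansion you flag as the main obstacle, so the bookkeeping is lighter.

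One slip to fix in your boundary argument: you write that an ``unmatched $O(1)$ error'' enters $f_2-3f_1+3f_0-f_{-1}$ and then becomes $O(1/\gamma^2)$ after dividing by $|\gamma|^3$, which does not parse. The correct observation (as in the opening lines of the proof of Lemma~\ref{l6}) is that the values $f(-1/\gamma+k\gamma)$ for $k=0,1,2,3$ are all $O(|\gamma|)$, so the third difference itself is $O(|\gamma|)$ near the boundary, and division by $|\gamma|^3$ then gives the claimed $O(1/\gamma^2)$.
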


The first term on the right-hand side of \eq{36} is bounded as in \eq{58} and \eq{59}.
For the second term on the right-hand side of \eq{36},
from the proof of Lemma \ref{l3}, we have
\besn{\label{37}
&E(1+|W|^3)|f(W+O(\alpha))|\\
\leq &C E (1+W^3) I(W\geq x-C\alpha)\\
&+ C P(Z_\g>x-\alpha) (1+E|W^3|)\\
&+ C E(1+W^3)I(0\leq W\leq x) e^{\frac{W^2}{2}-\frac{\g W^3}{6}} P(Z_\g>x-\alpha).
}
Similar to \eq{35}, using Lemma \ref{l9}, we have
\bes{
&E (1+W^3)I(0\leq W\leq x)e^{\frac{W^2}{2}-\frac{\g W^3}{6}}\\
\leq & 1+ \int_0^x [3y^2+(1+y^3)(y-\frac{\g y^2}{2})] e^{\frac{y^2}{2}-\frac{\g y^3}{6}} P(W>y) dy\\
=& O(1)\int_0^x (1+y^4) e^{\frac{y^2}{2}-\frac{\g y^3}{6}} P(W>y) dy\\
=& O(1) x^4.
}
For the third term on the right-hand side of \eq{36} and the first term on the right-hand side of \eq{37}, we have
\begin{lemma}\label{l8}
\bes{
E(1+W^3)I(W\geq x-C\alpha)=O(1) x^3 \exp(-\frac{x^2}{2}+\frac{\g x^3}{6}).
}
\end{lemma}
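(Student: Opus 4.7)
By the reductions above \eqref{57} we may assume $x$ is sufficiently large, so $a := x - C\alpha > 0$. Since $(1+W^3) I(W \geq a) \geq 0$, the tail-integral identity gives
\[
E (1+W^3) I(W \geq a) = (1+a^3) P(W \geq a) + \int_a^\infty 3y^2 P(W > y)\, dy,
\]
and it suffices to bound each piece by $C x^3 \exp(-x^2/2 + \g x^3/6)$.

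For $y$ in the admissible range $0 \leq y \leq C_0 \alpha^{-1/2}$, the Chernoff bound with $t = y$ combined with Proposition~\ref{p1} yields
\[
P(W > y) \leq e^{-y^2} E e^{yW} \leq C \phi(y), \qquad \phi(y) := \exp\bigl(-\tfrac{y^2}{2} + \tfrac{\g y^3}{6}\bigr).
\]
For $y > C_0 \alpha^{-1/2}$ one takes $t = C_0 \alpha^{-1/2}$ instead, getting $P(W > y) \leq C \exp(-C y/\sqrt{\alpha})$, whose contribution to the integral is exponentially smaller than $\phi(x)$ and hence negligible. Within the admissible range $|\g| y \leq C |\g|^{1/2} \leq 1$ by \eqref{55}, so
\[
-\phi'(y) = (y - \g y^2/2) \phi(y) \geq \tfrac{y}{2} \phi(y),
\]
whence $y^2 \phi(y) \leq -2 y \phi'(y)$. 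Two successive integrations by parts (the second using $\phi(y) \leq -(2/y) \phi'(y)$ for the residual $\int \phi$) then give $\int_a^\infty 3 y^2 \phi(y)\, dy \leq C a \phi(a)$, while the boundary piece contributes $(1+a^3) P(W \geq a) \leq C x^3 \phi(a)$.

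It remains to compare $\phi(a)$ with $\phi(x)$. Taylor-expanding the exponent shows the difference between $-(x-C\alpha)^2/2 + \g(x-C\alpha)^3/6$ and $-x^2/2 + \g x^3/6$ is $O(x\alpha) + O(|\g| x^2 \alpha)$. Both are $O(1)$: by \eqref{55}, $|\g| x^2 \leq C$; and combining \eqref{5} with \eqref{47} gives $\alpha \leq 2 m n s^4 d^4 \de^5$, whence $x \alpha \leq 2 C_0 (m n s^4 d^4 \de^5)^{1/2} \leq C$ using the smallness of $m n s^4 d^4 \de^5$ from the reductions above \eqref{57}. Hence $\phi(a) = O(1)\phi(x)$ and the lemma follows. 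The main obstacle is precisely this range-of-validity issue for Proposition~\ref{p1}: when $\g > 0$ the integrand $y^2 \phi(y)$ need not decay at infinity, so one cannot apply the moment generating function bound uniformly over $[a,\infty)$ but must split the integral at the threshold $\alpha^{-1/2}$ and handle the far tail by a separate Chernoff argument.
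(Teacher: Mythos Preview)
Your argument is correct in outline, but it is considerably more elaborate than necessary, and the ``main obstacle'' you single out is an artefact of your own choice of tilt. The paper's proof applies Markov's inequality with the \emph{fixed} parameter $t=x$ (not $t=y$):
\[
P(W>y)\ \leq\ e^{-xy}\,E e^{xW}\ \leq\ C\exp\Bigl(\tfrac{x^2}{2}+\tfrac{\g x^3}{6}-xy\Bigr),
\]
which is valid for \emph{every} $y\in\IR$ because Proposition~\ref{p1} is invoked only once, at $t=x$, and $x$ is already in the admissible range by hypothesis. The tail-integral identity then reduces to the elementary integral $\int_a^\infty y^2 e^{-xy}\,dy\asymp x\,e^{-xa}/x^2$, and both the boundary term and the integral are immediately bounded by $Cx^3\exp(-x^2/2+\g x^3/6)$ using $\alpha x=O(1)$. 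No splitting, no integration by parts, and no separate far-tail estimate is needed.

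By contrast, your choice $t=y$ gives the pointwise-sharper bound $P(W>y)\leq C\phi(y)$ but only on a bounded range, forcing the split at $\alpha^{-1/2}$ and the two integrations by parts. One small technical slip: the range in Theorem~\ref{t1} is $x\leq C_0(mns^4d^4\de^5)^{-1/2}$, which by \eqref{5}--\eqref{47} can exceed $C_0\alpha^{-1/2}$ by a factor up to $\sqrt{2}$; so your split point $C_0\alpha^{-1/2}$ may actually fall below $a$. This is harmless (apply Proposition~\ref{p1} with a larger constant $C_0'$ to set the threshold), but it is precisely the kind of bookkeeping that the fixed-tilt argument avoids altogether.
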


Note that
\be{
E|W|^3\leq \sqrt{E W^4}\leq C\sqrt{1+ns^3 d^3\de^4}\leq C(1+n s^2 d^2 \de^3)\leq C.
}
The fourth term on the right-hand side of \eq{36} and the second term on the right-hand side of \eq{37} are bounded from \eq{27} by
\be{
CP(Z_\g>x-C\alpha)\leq \frac{C}{x}\exp(-\frac{x^2}{2}+\frac{\g x^3}{6}).
}
The fifth and sixth terms on the right-hand side of \eq{36} are bounded in a manner similar as for $R_4$ (cf. \eq{43}) by
\bes{
&\frac{C}{\g^2} P(W\leq -1/\g+O(\alpha))I(\g>0)\\
&+\frac{C}{\g^2} P(W\geq -1/\g-O(\alpha))I(\g<0)\\
 \leq& C|\g|\exp(-\frac{x^2}{2}+\frac{\g x^3}{6}).
}
In summary, we have
\bes{
\frac{|R_1|}{\exp(-x^2/2+\g x^3/6)}\leq & C \alpha^2 \big( \frac{1}{\alpha^2} m n s^4 d^4 \de^5 x +x^3  \big)\\
\leq & C m n s^4 d^4 \de^5 x,
}
where we use $n^2 s^4 d^4 \de^6 x^2\leq C n s^2 d^2 \de^3\leq C m n s^4 d^4 \de^5$ (cf. \eq{47}).
The bound \eq{57}, hence the theorem, is proved by combining \eq{33} and the bounds on $|R_1|$--$|R_5|$.

\subsection{Proofs of lemmas}

In the following, we prove the lemmas stated in the proof above.
\begin{proof}[Proof of Lemma \ref{l11}]
Denote the Kolmogorov distance between two probability distributions by
\be{
d_K(\mathcal{L}(X), \mathcal{L}(Y)):=\sup_{x\in \mathbb{R}}|P(X\leq x)-P(Y\leq x)|.
}
For $\beta>0$ to be chosen, let
\be{
g_\beta(w)=
\begin{cases}
1 & w\leq x,\\
1+(x-w)/\beta & x<w\leq x+\beta,\\
0 & w>x+\beta.
\end{cases}
}
Let $F:=F_{g_\beta}$ be the bounded solution to
\ben{\label{16}
F'(w)-w F(w)=g_\beta(w)-E g_\beta (Z),
}
where $Z\sim N(0,1)$.
From Lemma 2.5 of \cite{ChGoSh11}, we have
\ben{\label{15}
|F'(w+v)-F'(w)|\leq |v|\left(1+|w|+\frac{1}{\beta} \int_0^1 I_{[x, x+\beta]}(w+rv) dr \right) .
}
Replacing $w$ by $W$ and taking expectations on both sides of the equation \eq{16}, we have
\besn{\label{17}
&P(W\leq x)-\Phi(x)\leq E g_\beta(W)-E g_\beta(Z)+Eg_\beta(Z)-\Phi(x)\\
\leq& EF'(W)-EWF(W)+P(x\leq Z\leq x+\beta)\\
\leq& EF'(W)-EWF(W)+C\beta.
}
Let $U\sim \text{Unif}[0,1]$ be independent of all else.
By (LD1), (LD2), $E\xi_i=0$, and Taylor's expansion, we have
\bes{
E W F(W)=& \sum_{i=1}^n E \xi_i F(W) =\sum_{i=1}^n E \xi_i [F(W)-F(W-\xi_{A_i})]\\
=& \sum_{i=1}^n E \xi_i \xi_{A_i} F'(W-U \xi_{A_i})\\
=&\sum_{i=1}^n \sum_{j\in A_i} E \xi_i \xi_j E F'(W-\xi_{A_{ij}})\\
&+\sum_{i=1}^n \sum_{j\in A_i} E \xi_i \xi_j [F'(W-U\xi_{A_i})- F'(W-\xi_{A_{ij}})].
}
From $E W^2=\sum_{i=1}^n \sum_{j\in A_i} E \xi_i \xi_j=1$, we have
\bes{
&EF'(W)-E W F(W)\\
=& \sum_{i=1}^n \sum_{j\in A_i} E \xi_i \xi_j E[F'(W)- F'(W-\xi_{A_{ij}})]\\
&-\sum_{i=1}^n \sum_{j\in A_i} E \xi_i \xi_j [F'(W-U\xi_{A_i})- F'(W-\xi_{A_{ij}})].
}
From \eq{15} and the boundedness conditions in \eq{11} and \eq{48}, we have
\bes{
&|EF'(W)-E W F(W)|\\
\leq & Cn s^2 d^2 \de^3 (1+\frac{1}{\beta} P(W\in [x-Csd \de, x+\beta+Csd\de])).
}
Using
\be{
P(W\in [x-Csd \de, x+\beta+Csd\de]))\leq 2d_K(\mathcal{L}(W), N(0,1))+C(sd\de+\beta),
}
we have
\besn{\label{18}
&|EF'(W)-E W F(W)|\\
\leq & Cn s^2d^2 \de^3 +\frac{C n s^2d^2 \de^3 sd \de}{\beta}+Cn s^2 d^2 \de^3 \frac{d_K(\mathcal{L}(W), N(0,1))}{\beta}.
}
From \eq{17} and \eq{18}, we have
\bes{
&P(W\leq x)-\Phi(x)\\
\leq & C\beta+Cn s^2 d^2 \de^3 +\frac{C n s^2 d^2 \de^3 sd \de}{\beta}+Cn s^2 d^2 \de^3 \frac{d_K(\mathcal{L}(W), N(0,1))}{\beta}.
}
From a similar argument for the lower bound, we have
\bes{
&|P(W\leq x)-\Phi(x)|\\
\leq & C\beta+Cn s^2 d^2 \de^3 +\frac{C n s^2 d^2 \de^3 sd \de}{\beta}+Cn s^2 d^2 \de^3 \frac{d_K(\mathcal{L}(W), N(0,1))}{\beta}.
}
Taking supremum over $x$, choosing $\beta=2C n s^2 d^2 \de^3$, solving the resulting recursive inequality for $d_K(\mathcal{L}(W), N(0,1))$, and noting that $sd\de\leq ns^2d^2\de^3$ from \eq{47}, we arrive at
\be{
d_K(\mathcal{L}(W), N(0,1))\leq C n s^2 d^2 \de^3.
}
\end{proof}

\begin{proof}[Proof of Lemma \ref{l6}]
We only prove for the case $\g>0$. The case $\g<0$ can be proved similarly.
For $w<-\frac{1}{\g}+\g$, because
\be{
f(-1/\g)=0,
}
\be{
f(-1/\g+\g)=\g (1-E h_\alpha(Z_\g)),
}
\be{
f(-1/\g+2\g)=\g^2 f(-1/\g+\g) + \g (1-E h_\alpha(Z_\g))
}
\be{
f(-1/\g+3\g)=2\g^2 f(-1/\g+2\g) + \g (1-E h_\alpha(Z_\g)),
}
we have (cf. \eq{42})
\be{
\frac{1}{\gamma}(f(w+\gamma)-f(w))-w f(w)=O(1).
}

For $-\frac{1}{\gamma}+\gamma\leq w$ and $w_0\leq w<w_0+\g$ such that $w_0\in \{\gamma \mathbb{Z}-\frac{1}{\gamma}\}$, we have, from the construction of $f$ (cf. \eq{41}),
\bes{
& \frac{1}{\gamma}(f(w+\gamma)-f(w))-w f(w)\\
=&\frac{1}{\g} [f(w_0+\g)-f(w_0)]-wf(w_0)\\
&+(w-w_0) \Big\{ \frac{1}{\g} [f'(w_0+\g)-f'(w_0)]-wf'(w_0)   \Big\} \\
&+\frac{(w-w_0)^2}{2}  \Big\{ \frac{1}{\g} [f''(w_0+\g)-f''(w_0)]-wf''(w_0)   \Big\}\\
&+[-\frac{3(w-w_0)^3}{2\g}+\frac{5(w-w_0)^4}{2\g^2}-\frac{(w-w_0)^5}{\g^3} ]\\
&\quad \times\Big\{ \frac{1}{\g} [(f''(w_0+2\g)-f''(w_0+\g))-(f''(w_0+\g)-f''(w_0))]-w(f''(w_0+\g)-f''(w_0))   \Big\}\\
=:&H_1+H_2+H_3+H_4.
}
Note that $f$ satisfies \eq{19} on $\mathcal{S}$. We have
\bes{
H_1=&\frac{1}{\g} [f(w_0+\g)-f(w_0)]-w_0f(w_0)-(w-w_0)f(w_0)\\
=&h_\alpha(w_0)-E h_\alpha(Z_\g)+O(|\g|)\sum_{i=-K}^{K}|f(w_0+i\cdot\gamma)|\\
=&h_\alpha(w)-E h_\alpha(Z_\g)+O(1)I(|w-x|\leq C\alpha)+O(|\gamma|)\sum_{i=-K}^{K}|f(w_0+i\cdot\gamma)| .
}
For $H_2$, from the expression of $f'$ on $\mathcal{S}$ (cf. \eq{39}) and using again the fact that $f$ satisfies \eq{19} on $\mathcal{S}$, we have
\bes{
H_2=& \frac{w-w_0}{2\g} \Big\{  \frac{1}{\g} \big[ (f(w_0+2\g)-f(w_0))-(f(w_0+\g)-f(w_0-\g))  \big]\\
&\qquad \qquad  -w(f(w_0+\g)-f(w_0-\g))       \Big\}\\
=&O(1)[h_\alpha(w_0+\g)-h_\alpha(w_0-\g)]+O(|\gamma|)\sum_{i=-K}^{K}|f(w_0+i\cdot\gamma)|\\
=&O(1)I(|w-x|\leq C\alpha)+O(|\gamma|)\sum_{i=-K}^{K}|f(w_0+i\cdot\gamma)|.
}
Similarly, from \eq{40},
\bes{
H_3=& \frac{(w-w_0)^2}{2\g^2} \Big\{  \frac{1}{\g} \big[ (f(w_0+2\g)-2f(w_0+\g)+f(w_0))-(f(w_0+\g)-2f(w_0)+f(w_0-\g))  \big]\\
&\qquad \qquad  -w(f(w_0+\g)-2f(w_0)+f(w_0-\g))       \Big\}\\
=&O(1)I(|w-x|\leq C\alpha)+O(|\gamma|)\sum_{i=-K}^{K}|f(w_0+i\cdot\gamma)|,
}
and
\be{
H_4=O(1)I(|w-x|\leq C\alpha)+O(|\gamma|)\sum_{i=-K}^{K}|f(w_0+i\cdot\gamma)|.
}
Equation \eq{20} is proved by combining the above estimates and observing that the right-hand side is bounded.
\end{proof}

\begin{proof}[Proof of Lemma \ref{l3}]
We only prove for the case $\g>0$. The case $\g<0$ can be proved similarly.
Recall the definition of $f$.
If $w_0-\g>x-\alpha$, then we use $|f(w_0)|\leq C$.
If $w_0\leq -1/\g$, then $f(w_0)=0$.
If $-1/\g\leq  w_0-\g\leq x-\alpha$, then
\be{
f(w_0)=\frac{\g P(Z_\g\leq w_0-\g)}{P(Z_\g=w_0-\g)}[1-E h_\alpha(Z_\g)].
}
Recall the proof of \eq{32}, if $-1/\g\leq w_0-\g\leq 0$, then
\be{
0\leq f(w_0)\leq 2 P(Z_\g>x-\alpha).
}
If $0<w_0-\g\leq x-\alpha$, then by \eq{28},
\be{
|f(w_0)|\leq C e^{\frac{w_0^2}{2}-\frac{\g w_0^3}{6}} P(Z_\g>x-\alpha).
}
The lemma is proved by combining the above bounds and noting that $|W-W_0|\leq \g$.
\end{proof}

\begin{proof}[Proof of Lemma \ref{l9}]
Similar to the proof of Lemma 5.2 of \cite{ChFaSh13} and use \eq{4}, we have, for some $\epsilon\in [0,1]$,
\bes{
&\int_0^{[x]} y^k e^{\frac{y^2}{2}-\frac{\g y^3}{6}} P(W>y) dy\\
\leq & \sum_{j=1}^{[x]} j^k \int_{j-1}^j e^{\frac{y^2}{2}-\frac{\g y^3}{6}-jy} e^{jy} P(W>y ) dy\\
\leq & \sum_{j=1}^{[x]} j^k e^{\frac{(j-1)^2}{2}-\frac{\g (j-\epsilon)^3}{6}-j(j-1)} \int_{j-1}^j e^{jy} P(W>y) dy\\
\leq & 2 \sum_{j=1}^{[x]} j^k e^{-\frac{j^2}{2}-\frac{\g (j-\epsilon)^3}{6}} \int_{-\infty}^{\infty} e^{jy} P(W>y) dy\\
=& 2 \sum_{j=1}^{[x]} j^k e^{-\frac{j^2}{2}-\frac{\g (j-\epsilon)^3}{6}} \frac{1}{j} E e^{jW}\\
=& O(1) \sum_{j=1}^{[x]} j^{k-1} e^{\frac{\g j^3}{6}-\frac{\g (j-\epsilon)^3}{6}}=O(1) x^k.
}
Similarly, we have
\bes{
& \int_{[x]}^x y^k e^{\frac{y^2}{2}-\frac{\g y^3}{6}} P(W>y) dy\\
\leq & x^k \int_{[x]}^x e^{\frac{y^2}{2}-\frac{\g y^3}{6}-xy} e^{xy} P(W>y) dy\\
\leq & x^k e^{\frac{[x]^2}{2}-\frac{\g ([x]+\epsilon)^3}{6}-x[x]} \int_{[x]}^x e^{xy} P(W>y) dy\\
\leq & 2x^k e^{-\frac{x^2}{2}-\frac{\g ([x]+\epsilon)^3}{6}} \int_{-\infty}^\infty e^{xy} P(W>y) dy\\
=& O(1) x^k.
}
This finishes the proof.
\end{proof}

\begin{proof}[Proof of Lemma \ref{l2}]
Let $U_1, U_2$ be independent $\sim$Unif$[0,1]$ and independent of all else.
By Taylor's expansion,
\besn{\label{44}
&E \frac{1}{\gamma} (f(W+\gamma)-f(W))\\
=& E \frac{1}{\gamma} [\gamma f'(W)+\frac{\gamma^2}{2} f''(W) +\gamma^3 (1-U_2)U_2 f^{(3)} (W+\g U_1 U_2)]\\
=& E f'(W) +\frac{\gamma}{2} E f''(W) + O(\gamma^2) E |f^{(3)} (W+O(|\gamma|))|.
}
By the local dependence structure (LD1)--(LD3) in Section 3.1, $E\xi_i=0$, Taylor's expansion and the boundedness conditions in \eq{11} and \eq{48}, we have
\bes{
&E Wf(W)\\
=& \sum_{i=1}^n E \xi_i [f(W)-f(W-\xi_{A_i})]\\
=& \sum_{i=1}^n E \xi_i [\xi_{A_i} f'(W-\xi_{A_i})+\frac{\xi_{A_i}^2}{2} f''(W-\xi_{A_i}) +O(s^3 d^3 \de^3) |f^{(3)}(W+O(sd\de))|]\\
=& \sum_{i=1}^n \sum_{j\in A_i} E \xi_i \xi_j f'(W-\xi_{A_i}) + \frac{1}{2} \sum_{i=1}^n \sum_{j\in A_i} \sum_{k\in A_i}
E \xi_i \xi_j \xi_k f''(W-\xi_{A_i})\\
&+O(ns^3 d^3 \de^4) E |f^{(3)} (W+O(sd \de))|\\
=:& B_1+B_2+O(ns^3d^3 \de^4) E |f^{(3)} (W+O(sd \de))|.
}
For $B_1$, by a similar expansion as above, we have
\bes{
& \sum_{i=1}^n \sum_{j\in A_i} E \xi_i \xi_j f'(W-\xi_{A_i})\\
=& \sum_{i=1}^n \sum_{j\in A_i} E \xi_i \xi_j E f'(W-\xi_{A_{ij}})+ \sum_{i=1}^n \sum_{j\in A_i} E \xi_i \xi_j [f'(W-\xi_{A_i})-f'(W-\xi_{A_{ij}})]\\
=& E f'(W) +\sum_{i=1}^n \sum_{j\in A_i} E \xi_i \xi_j E[f'(W-\xi_{A_{ij}})-f'(W)]\\
&+ \sum_{i=1}^n \sum_{j\in A_i} E \xi_i \xi_j [f'(W-\xi_{A_i})-f'(W-\xi_{A_{ij}})]\\
=&E f'(W)- \sum_{i=1}^n \sum_{j\in A_i} E \xi_i \xi_j E\xi_{A_{ij}} f''(W-\xi_{A_{ij}}) +O(ns^3 d^3 \de^4) E|f^{(3)} (W+O(sd \de))|\\
&+\sum_{i=1}^n \sum_{j\in A_i} E \xi_i \xi_j (\xi_{A_{ij}}-\xi_{A_i})f''(W-\xi_{A_{ij}}).
}
For $B_2$, we have
\bes{
&\frac{1}{2} \sum_{i=1}^n \sum_{j\in A_i} \sum_{k\in A_i}
E \xi_i \xi_j \xi_k f''(W-\xi_{A_i})\\
=& \frac{1}{2} \sum_{i=1}^n \sum_{j\in A_i} \sum_{k\in A_i}
E \xi_i \xi_j \xi_k E f''(W-\xi_{A_{ijk}})\\
&+\frac{1}{2} \sum_{i=1}^n \sum_{j\in A_i} \sum_{k\in A_i}
E \xi_i \xi_j \xi_k [f''(W-\xi_{A_i})-f''(W-\xi_{A_{ijk}})]\\
=& \frac{1}{2} \sum_{i=1}^n \sum_{j\in A_i} \sum_{k\in A_i}
E \xi_i \xi_j \xi_k E f''(W)\\
&+ \frac{1}{2} \sum_{i=1}^n \sum_{j\in A_i} \sum_{k\in A_i}
E \xi_i \xi_j \xi_k E[f''(W-\xi_{A_{ijk}})-f''(W)]\\
&+\frac{1}{2} \sum_{i=1}^n \sum_{j\in A_i} \sum_{k\in A_i}
E \xi_i \xi_j \xi_k [f''(W-\xi_{A_i})-f''(W-\xi_{A_{ijk}})]\\
=& \frac{1}{2} \sum_{i=1}^n \sum_{j\in A_i} \sum_{k\in A_i}
E \xi_i \xi_j \xi_k E f''(W)+O(ns^3 d^3 \de^4) E |f^{(3)} (W+O(sd \de))|.
}
Similarly,
\bes{
&- \sum_{i=1}^n \sum_{j\in A_i} E \xi_i \xi_j E\xi_{A_{ij}} f''(W-\xi_{A_{ij}}) \\
=& - \sum_{i=1}^n \sum_{j\in A_i} \sum_{k\in A_{ij}} E \xi_i \xi_j E\xi_{k} [f''(W-\xi_{A_{ij}})- f''(W-\xi_{A_{ijk}}) ]\\
=&O(ns^3 d^3 \de^4) E |f^{(3)} (W+O(sd \de))|,
}
and
\bes{
& \sum_{i=1}^n \sum_{j\in A_i} E \xi_i \xi_j (\xi_{A_{ij}}-\xi_{A_i})f''(W-\xi_{A_{ij}})    \\
=& \sum_{i=1}^n \sum_{j\in A_i} E \xi_i \xi_j (\xi_{A_{ij}}-\xi_{A_i})f''(W) + O(ns^3 d^3 \de^4) E |f^{(3)} (W+O(sd \de))|.
}
Recall $\g$ from \eq{50}.
Combining the above estimates, we have
\ben{\label{45}
E Wf(W)=E f'(W)+\frac{\gamma}{2} E f''(W)+O(ns^3 d^3 \de^4) E |f^{(3)} (W+O(sd \de))|.
}
By \eq{44} and \eq{45}, we conclude that
\bes{
&E[\frac{1}{\gamma}(f(W+\gamma)-f(W))-W f(W)]\\
=&O(\gamma^2) E f^{(3)} (W+O(|\g|))+ O(ns^3 d^3 \de^4) E |f^{(3)} (W+O(sd \de))|\\
=& O(n^2 s^4 d^4 \de^6) E|f^{(3)}(W+O(ns^2 d^2 \de^3))|,
}
where we use $|\g|\leq C n s^2 d^2 \de^3$, $sd\de\leq C ns^2 d^2 \de^3$ and $ns^3 d^3 \de^4\leq C(n s^2 d^2 \de^3)^2$ from \eq{47}.
\end{proof}

\begin{proof}[Proof of Lemma \ref{l4}]
We only prove for the case $\g>0$. The case $\g<0$ can be proved similarly.
Note that from the construction of $f$ (cf. \eq{41}),
\be{
f^{(3)}(w)=O(1)\frac{f''(w_0+\g)-f''(w_0)}{\g}.
}
For $w_0\leq -1/\g$,
from the arguments at the beginning of the proof of Lemma \ref{l6},
$\frac{f''(w_0+\g)-f''(w_0)}{\g}=O(1/\g^2)$.
For $w_0\geq -1/\g+\g$, from the construction of $f$ (cf. \eq{40}) and the equation \eq{19} for $w\in \mathcal{S}$, we have
\besn{\label{21}
& \frac{f''(w_0+\g)-f''(w_0)}{\g}\\
=& \frac{[f(w_0+2\g)-2f(w_0+\g)+f(w_0)]-[f(w_0+\g)-2f(w_0)+f(w_0-\g)]}{\g^3}\\
=& \frac{[(w_0+\g)f(w_0+\g)+h_\alpha(w_0+\g)]-[w_0 f(w_0)+h_\alpha(w_0)]}{\g^2}\\
&-\frac{[w_0f(w_0)+h_\alpha(w_0)]-[(w_0-\g) f(w_0-\g)+h_\alpha(w_0-\g)]}{\g^2}.
}
Rearranging terms, using $|h_\alpha''(w)|\leq \frac{8}{\alpha^2}I(x-\alpha\leq w\leq x+\alpha)$ and that $f$ solves \eq{19} on $\mathcal{S}$, we have
\bes{
\eq{21}=& O(\frac{1}{\alpha^2})I(|w_0-x|\leq C\alpha)\\
&+\frac{1}{\g}[f(w_0+\g)-f(w_0)]+\frac{w_0}{\g^2} [f(w_0+\g)-f(w_0)]\\
&-\frac{w_0-\g}{\g^2}[f(w_0)-f(w_0-\g)]\\
=&O(\frac{1}{\alpha^2})I(|w_0-x|\leq C\alpha)\\
&+[w_0 f(w_0)+h_\alpha(w_0)-E h_\alpha(Z_\g)](1+\frac{w_0}{\g}) \\
&-\frac{w_0-\g}{\g}[(w_0-\g) f(w_0-\g)+h_\alpha(w_0-\g)-E h_\alpha(Z_\g)].
}
Rearranging terms, using $|h_\alpha'(w)|\leq \frac{2}{\alpha}I(x-\alpha \leq w\leq x+\alpha)$ and that $f$ solves \eq{19} on $\mathcal{S}$, we have
\bes{
\eq{21}=&O(\frac{1}{\alpha^2})I(|w_0-x|\leq C\alpha)\\
&+w_0 f(w_0)+(w_0-\g) f(w_0-\g)+w_0 f(w_0-\g)\\
&+ h_\alpha(w_0)-E h_\alpha(Z_\g) +h_\alpha(w_0-\g) -E h_\alpha(Z_\g)\\
&+\frac{w_0}{\g}(f(w_0)-f(w_0-\g))+\frac{w_0}{\g} (h_\alpha(w_0)-h_\alpha(w_0-\g))\\
=& O(\frac{1}{\alpha^2})I(|w_0-x|\leq C\alpha)\\
&+w_0 f(w_0)+(w_0-\g) f(w_0-\g)+w_0 f(w_0-\g)\\
&+ h_\alpha(w_0)-E h_\alpha(Z_\g) +h_\alpha(w_0-\g) -E h_\alpha(Z_\g)\\
&+ O(\frac{1}{\alpha}) x I(|w_0-x|\leq C\alpha)\\
&+w_0^2 [(w_0-\g)f(w_0-\g)+h_\alpha(w_0-\g)-E h_\alpha(Z_\g)].
}
The lemma is proved by replacing $w$ by $W$, $w_0$ by $W_0$, and taking expectations.
\end{proof}

\begin{proof}[Proof of Lemma \ref{l8}]
We only prove for the case $\g>0$. The case $\g<0$ can be proved similarly.
By Proposition \ref{p1},
\be{
P(W>y)\leq \frac{E e^{xW}}{e^{xy}}\leq C \exp(\frac{x^2}{2}+\frac{\g x^3}{6}-xy).
}
Therefore,
\bes{
&E(1+W^3)I(W\geq x-C\alpha)\\
=& (1+y^3) P(W>y)\Big|_{y=x-C\alpha}+\int_{x-C\alpha}^\infty 3 y^3 P(W>y) dy\\
=&O(1) x^3 \exp(-\frac{x^2}{2}+\frac{\g x^3}{6}).
}
\end{proof}


\section*{Acknowledgements}

Fang X. was partially supported by Hong Kong RGC ECS 24301617 and GRF 14302418, a CUHK direct grant and a CUHK start-up grant.
Shao Q. M. was partially supported by Hong Kong RGC GRF 14302515 and 14304917, and a CUHK direct grant.


\end{document}